\DeclareRobustCommand{\cev}[1]{%
	{\mathpalette\do@cev{#1}}%
}
\newcommand{\do@cev}[2]{%
	\vbox{\offinterlineskip
		\sbox\z@{$\m@th#1 x$}%
		\ialign{##\cr
			\hidewidth\reflectbox{$\m@th#1\vec{}\mkern4mu$}\hidewidth\cr
			\noalign{\kern-\ht\z@}
			$\m@th#1#2$\cr
		}%
	}%
}
\theoremstyle{plain}
\newtheorem{Th}{Theorem}[section]
\newtheorem{Lemma}[Th]{Lemma}
\theoremstyle{definition}
\newtheorem{Def}[Th]{Definition}
\newtheorem{Rem}[Th]{Remark}
\newtheorem{?}[Th]{Problem}
\newtheorem{Ex}[Th]{Example}
\newcommand{\R}{\mathbb{R}}
\newcommand{\C}{\mathbb{C}}
\newcommand{\holr}{\mathrm{Hol}^{\mathrm{reg}}}
\DeclareMathOperator{\id}{id}
\DeclareSymbolFont{rsfs}{U}{rsfs}{m}{n}
\DeclareSymbolFontAlphabet{\mathscrsfs}{rsfs}
\title{Generalized Pentagon Equations}
\author{Anton Alekseev, Florian Naef, Muze Ren}
\begin{document}
\date{}
\maketitle
\begin{abstract}
Drinfeld defined the Knizhinik--Zamolodchikov (KZ) associator $\Phi_{\rm KZ}$ by considering the regularized holonomy
of the KZ connection along the {\em droit chemin}  $[0,1]$.  The KZ associator is a group-like element of the free associative algebra with two generators, and it satisfies the pentagon equation. 

In this paper, we consider paths on $\mathbb{C}\backslash \{ z_1, \dots, z_n\}$ which start and end at tangential base points. These paths are not necessarily straight, and they may have a finite number of transversal self-intersections.
We show that the regularized holonomy $H$ of the KZ connection associated to such a path satisfies a generalization of Drinfeld's pentagon equation. In this equation, we encounter $H$, $\Phi_{\rm KZ}$, and new factors associated to self-intersections, to tangential base points, and to the rotation number of the path.
\end{abstract}

\section{Introduction}
The pentagon equation was first introduced in the theory of braided monoidal categories by Mac Lane \cite{Maclane1998}. 
In a braided monoidal category $\mathcal{C}$ with tensor product $\otimes$, braiding $b$ and associativity  constraint $a$
one has the following commuting pentagon diagram:
\begin{equation}\label{eq:cate_Pentagon}
\begin{tikzcd}
&(W\otimes X)\otimes (Y\otimes Z)\arrow[rd,"a_{W,X,Y\otimes Z}"]&\\
\big((W\otimes X)\otimes Y\big)\otimes Z\arrow[ru,"a_{W\otimes X,Y,Z}"]\arrow[d,"a_{W,X,Y}\otimes\id"]&&W\otimes\big(X\otimes (Y\otimes Z)\big)\\
\big(W\otimes (X\otimes Y)\big)\otimes Z\arrow[rr,"a_{W,X\otimes Y,Z}"]&&W\otimes\big((X\otimes Y)\otimes Z\big)\arrow[u,"\id_{W}\otimes a_{X,Y,Z}"]\\
\end{tikzcd}
\end{equation}
and two hexagon diagrams.



The Mac Lane Coherence Theorem \ref{th:coherence} (originally in \cite{Maclane1998}, we cite the form in \cite{Etingof2015}) explains relation between morphisms in braided monoidal categories and elements of the braid group.

%
\begin{Th}[{\cite[Theorem~2.9.2]{Etingof2015}}]\label{th:coherence}
Let $X_1,\dots,X_n\in \mathcal{C}$, $P_1,P_2$ be two parenthesized products of $X_1,\dots,X_n$ (in any order order) possibly with insertions of the unit object $I$, and $f,g: P_1\to P_2$ be two isomorphisms obtained by composing braiding, associativity and unit isomorphism and their inverses possibly tensored with identity morphisms. If the underlying braids are isotopic, then $f=g$.
\end{Th}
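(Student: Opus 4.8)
The plan is to reduce the statement to the assertion that the braidings of $\mathcal{C}$, applied in the various tensor slots, satisfy exactly the defining relations of the Artin braid group $B_n$; conceptually this is the Joyal--Street identification of the free braided monoidal category on $n$ objects with the category of $n$-coloured braids, and I would make that identification explicit, taking as known Mac Lane's coherence theorem for ordinary monoidal categories. Call a morphism assembled by tensoring and composing the constraints $a^{\pm 1}$, $l^{\pm 1}$, $r^{\pm 1}$ and identities \emph{canonical}; by monoidal coherence any two canonical morphisms with the same source and target agree, so a canonical morphism is determined by its source and target and a canonical endomorphism is an identity. For an ordered word $\mathbf{y} = (Y_1, \dots, Y_n)$ in the objects $X_1, \dots, X_n$ write $\Phi(\mathbf{y}) = Y_1 \otimes \cdots \otimes Y_n$ for the left-bracketed tensor product, and for $1 \le i \le n-1$ let $s_i \mathbf{y}$ be $\mathbf{y}$ with its $i$-th and $(i{+}1)$-st entries transposed.

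First I would define, for each admissible $i$, the \emph{structural braiding} $c^{(i)}_{\mathbf{y}} \colon \Phi(\mathbf{y}) \to \Phi(s_i\mathbf{y})$ as $\id^{\otimes (i-1)} \otimes b_{Y_i,Y_{i+1}} \otimes \id^{\otimes(n-i-1)}$ pre- and post-composed with the unique canonical isomorphisms rebracketing it to $\Phi(\mathbf{y})$ and $\Phi(s_i\mathbf{y})$. Then I would verify the relations $c^{(i)}_{s_j\mathbf{y}}\, c^{(j)}_{\mathbf{y}} = c^{(j)}_{s_i\mathbf{y}}\, c^{(i)}_{\mathbf{y}}$ for $|i-j| \ge 2$, which is immediate from bifunctoriality of $\otimes$, and the braid relation $c^{(i)}_{s_{i+1}s_i\mathbf{y}}\, c^{(i+1)}_{s_i\mathbf{y}}\, c^{(i)}_{\mathbf{y}} = c^{(i+1)}_{s_is_{i+1}\mathbf{y}}\, c^{(i)}_{s_{i+1}\mathbf{y}}\, c^{(i+1)}_{\mathbf{y}}$, which is the classical diagram chase deducing the Yang--Baxter equation $(\id \otimes b)(b \otimes \id)(\id \otimes b) = (b \otimes \id)(\id \otimes b)(b \otimes \id)$ from the two hexagon axioms and the naturality of $b$, the canonical isomorphisms produced en route cancelling by monoidal coherence. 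Granting this together with Artin's presentation of $B_n$ by the generators $\sigma_i$ and exactly these relations, the data $\mathbf{y} \mapsto \Phi(\mathbf{y})$, $\sigma_i \mapsto c^{(i)}$ extend to a functor $\Phi$ from the $n$-coloured braid category to $\mathcal{C}$; in particular every braid $\beta$ carrying a word $\mathbf{y}$ to a word $\mathbf{y}'$ acquires a definite image $\Phi(\beta)\colon \Phi(\mathbf{y}) \to \Phi(\mathbf{y}')$ depending only on $\beta$.

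Next I would identify $f$ with $\Phi$ of its underlying braid. Let $\mathbf{y}$ and $\mathbf{y}'$ be the words obtained from $P_1$ and $P_2$ by deleting all unit factors, and let $u \colon \Phi(\mathbf{y}) \xrightarrow{\sim} P_1$ and $v \colon P_2 \xrightarrow{\sim} \Phi(\mathbf{y}')$ be the unique canonical isomorphisms. Writing $f = e_k \circ \cdots \circ e_1$ as in the hypothesis, I would first invoke the hexagon axioms to rewrite every factor that is a braiding $b_{A,B}$ between composite objects as a composite of elementary braidings $b_{X_i,X_j}$ and associativity constraints, and the standard consequence of the axioms that a braiding with a unit factor is canonical, so that it records no crossing; this presents $v f u$ as a composite of canonical morphisms and elementary braidings acting in single slots. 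Now insert between consecutive factors the unique canonical isomorphisms onto the corresponding left-bracketed unit-free words: every maximal block of canonical factors collapses to a single canonical isomorphism, and by monoidal coherence to an identity whenever it is an endomorphism, leaving precisely a composite $c^{(i_k)\,\epsilon_k} \circ \cdots \circ c^{(i_1)\,\epsilon_1} = \Phi(\beta_f)$, where $\beta_f = \sigma_{i_k}^{\epsilon_k}\cdots\sigma_{i_1}^{\epsilon_1}$ is exactly the braid recorded by the successive crossings of $f$. The identical computation gives $v g u = \Phi(\beta_g)$. Since by hypothesis $\beta_f$ and $\beta_g$ are isotopic, hence equal in $B_n$ (and they lie in the common Hom-set from $\mathbf{y}$ to $\mathbf{y}'$), well-definedness of $\Phi$ gives $v f u = \Phi(\beta_f) = \Phi(\beta_g) = v g u$, whence $f = g$; this also shows en passant that the underlying braid does not depend on the chosen factorisation.

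I expect the main obstacle to be the diagram chase of the second paragraph — that the hexagon axioms force the braid relation among the structural braidings $c^{(i)}$ — which, while classical, demands careful tracking of the inserted associativity and unit constraints and repeated use of monoidal coherence; this is in effect the heart of the Joyal--Street theorem. The remaining points are clerical: checking in the third paragraph that unit insertions never contribute a crossing and that the associativity constraints interleaved among the braidings always collapse, both of which follow solely from the unit and associativity halves of Mac Lane's coherence theorem.
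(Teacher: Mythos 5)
This statement is not proved in the paper at all: it is quoted as background, with the proof delegated to the cited reference \cite[Theorem~2.9.2]{Etingof2015}, so there is no in-paper argument to compare against. Your outline is essentially the standard Joyal--Street coherence argument that underlies the cited proof, and it is sound: collapse all associativity/unit data by Mac Lane's monoidal coherence, define the structural braidings $c^{(i)}$ on left-bracketed unit-free words, check the far-commutation relations (interchange law) and the braid relation (hexagons plus naturality, i.e.\ the Yang--Baxter identity), and thereby obtain a well-defined functor from the coloured braid groupoid, against which any structural morphism is matched via its underlying braid. Two points deserve explicit care if you write this up. First, Artin's presentation of $B_n$ has to be upgraded to a presentation of the coloured braid \emph{groupoid} (the action groupoid of $B_n$ on words); this is routine but is exactly what licenses ``well-definedness of $\Phi$''. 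Second, your reduction step --- decomposing braidings $b_{A,B}$ of composite objects via the hexagons and discarding braidings involving the unit --- must be checked to preserve the underlying braid of $f$ as read off from its given factorisation (block crossings decompose into the corresponding words in the $\sigma_i$, and unit strands record no crossing); you assert this implicitly, and it is true, but it is part of the bookkeeping rather than a consequence of coherence alone. With those two clerical points filled in, your proposal is a correct proof of the quoted theorem.
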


Let $\mathfrak{g}$ be a complex quadratic Lie algebra (that is, a finite dimensional Lie algebra which possesses an invariant scalar product). Drinfeld \cite{Drinfeld1989,Drinfeld1990} turned the category $U\mathfrak{g}-{\rm mod}_\hbar$ of its finite dimensional representations into a braided monoidal category using solutions of the Knizhnik-Zamolodchikov (KZ) equations.


In more detail, the KZ connection for $n$ points on the complex plane 
\begin{equation}
A_{\rm KZ} = \frac{1}{2\pi i} \sum_{i<j} t_{i,j} \, d\log(z_i - z_j).
\end{equation}
takes values in the Drinfeld-Kohno Lie algebra of infinitesimal braids $\mathfrak{t}_{n}$ with generators $t_{ij}=t_{ji}$ with $i,j \in \{ 1, \dots, n\}$ and the relations
\begin{equation} \label{eq:DK}
[t_{i,j}, t_{k,l}]=0, \hskip 0.3cm [t_{i,j} + t_{i,k}, t_{j,k}]=0
\end{equation}
for distinct indices $i,j,k,l$.

The braiding constraints are given by $b_{i,j} = \exp(t_{i,j}/2)$, and the associativity constraint is defined by the regularized holonomy of the KZ equation for $n=3, z_1=0, z_3=1$ and $z=z_2$ moving on the straight segment ({\em droit chemin}) $[0,1]$. It carries the name of the KZ associator:
$$
\Phi_{\rm KZ}(t_{12}, t_{23}) ={\rm Hol}^{\rm reg}\left( \frac{t_{12}}{2\pi i} \, d\log(z) + \frac{t_{23}}{2\pi i} \, d\log(z-1), [0,1]\right).
$$
The KZ associator satisfies the pentagon equation in $U(\mathfrak{t}_4)$:
\begin{equation}\label{eq:Drinfeld_Pentagon}
\Phi_{\rm{KZ}}(t_{23}, t_{34})\Phi_{\rm{KZ}}(t_{1,23}, t_{23,4})\Phi_{\rm{KZ}}(t_{12}, t_{23})=\Phi_{\rm{KZ}}(t_{12}, t_{2,34})\Phi_{\rm{KZ}}(t_{12,3}, t_{3,4}),
\end{equation}
where $t_{i,jk}=t_{ij}+t_{ik}$.

%


In this paper, we generalize equation \eqref{eq:Drinfeld_Pentagon} to the case of an arbitrary number of marked points on the complex plane, and to paths which are not necessarily straight and which may have self-intersections. More precisely, we will consider the KZ equation on $\Sigma=\mathbb{C}\backslash \{ z_1, \dots, z_n\}$. For a point $z$ moving on $\Sigma$ we associate the KZ connection
\begin{equation}\label{eq:intro_KZ}
A_{z}=\frac{1}{2\pi i}\sum_{i=1}^{n} t_{i,z} \, d\log(z-z_i)
\end{equation}
taking values in $\mathfrak{t}_{n+1}$.
Consider a path $\gamma$ which  starts at a tangential base point $(z_i, v_i)$ and ends at another tangential base point $(z_j, v_j)$, where $v_i, v_j$ are nonvanishing  vectors at $z_i$ and $z_j$, respectively. Denote by $\rm{rot}(\gamma)$
the rotation number of $\gamma$ with respect to the blackboard framing. We assume that $\gamma$ has a finite number of transverse self-intersection points. To each self-intersection point, one can associate the intersection number $\varepsilon_l\in \{1,-1\}, l=1, \dots, m$ indicating the orientation of the frame formed by two tangent vectors to the path at that point.

The main object of our study is the regularized holonomy
$$
H = {\rm Hol}^{\rm reg}(A_z, \gamma)
$$
associated to the path $\gamma$.
By adding one more point $w$ to the configuration, we can introduce several versions of the holonomy $H$ needed in the statement of the generalized pentagon equation:
\begin{subequations}\label{eq:intro_honolomies}
\begin{equation}
H_z:=\holr\left( \frac{t_{z,wj}}{2\pi i}\, d\log(z-z_j)+\sum_{k\ne j}\frac{t_{k,z}}{2\pi i}\, d\log(z-z_k),\gamma\right),   
\end{equation}
\begin{equation}
H_w:=\holr\left( \frac{t_{iz,w}}{2\pi i} \, d\log(w-z_i)+\sum_{k\ne i}\frac{t_{k,w}}{2\pi i}\, d\log(w-z_k),\gamma\right),
\end{equation}
\begin{equation}
H_{zw}:=\holr\left( \sum_{k=1}^n \frac{t_{k,zw}}{2\pi i} \, d\log(z-z_l),\gamma\right).
\end{equation}
\end{subequations}

Note that all these versions are obtained from $H$ by various substitutions, namely if we write
\[
H = H(t_{1,z}, \dots, t_{n,z})
\]
we have
\begin{subequations}\label{eq:intro_honolomiessubst}
\begin{equation}
H_z= H(t_{1,z}, \dots, t_{(j-1),z}, t_{jw,z}, t_{(j+1),z}, \dots, t_{n,z}),
\end{equation}
\begin{equation}
H_w=H(t_{1,w}, \dots, t_{(i-1),w}, t_{iz,w}, t_{(i+1),w}, \dots, t_{n,w}),
\end{equation}
\begin{equation}
H_{zw}=H(t_{1,zw}, \dots, t_{n,zw}).
\end{equation}
\end{subequations}

Equipped with this notation, we can now state (a somewhat simplified version of) the generalized pentagon equation:
\begin{Th}      \label{thm:main}
	There exist elements $C_l\in \exp(\mathfrak{t}_{n+2})$ for $l=1, \dots, m$ such that the following identity holds in $U(\mathfrak{t}_{n+2})$,

 \begin{equation} \label{eq:general_pentagon}
 \begin{array}{ll}
\Phi_{\rm{KZ}}(t_{zw},t_{wj}) \left(H_{zw} |v_j/v_i|^{t_{zw}/2\pi i} e^{\text{rot}(\gamma)t_{zw}}\right) \Phi_{\rm{KZ}}(t_{iz},t_{zw}) & = \\
H_z \left(\prod_{l=1}^{m}C_{l}^{-1}e^{-\varepsilon_l t_{zw}}C_{l}\right) H_w.
\end{array}
 \end{equation}	
\end{Th}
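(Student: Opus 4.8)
\emph{Strategy.} The plan is to realize all five factors on the two sides of \eqref{eq:general_pentagon} as regularized holonomies of one and the same flat connection — the KZ connection \eqref{eq:intro_KZ} on the configuration of the $z_i$ together with two moving points $z$ and $w$ — along two homotopic paths in the configuration space, and then invoke functoriality/homotopy invariance of the regularized holonomy. Concretely, I would work on $M = \{(z,w)\in(\mathbb{C}\setminus\{z_1,\dots,z_n\})^2 : z\neq w\}$, equipped with the connection $A_{z,w}$ taking values in $\mathfrak{t}_{n+2}$ obtained by adding the forms $d\log(z-w)$, $d\log(z-z_k)$, $d\log(w-z_k)$ with the appropriate generators. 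The point is that $H_{zw}$ (where $z$ and $w$ travel together, i.e. near the diagonal), $H_z$ (where $w$ sits at the tangential base point near $z_j$ while $z$ runs along $\gamma$), $H_w$ (symmetric role), and the two KZ associators (where one of $z,w$ runs along a \emph{droit chemin} between the diagonal and a marked point, the classical $n=3$ situation blown up at $z_i$ and $z_j$) are all holonomies of $A_{z,w}$ along explicit sub-paths.

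\emph{Key steps.} First, set up the regularized holonomy on $M$ with tangential base points and establish the three standard properties: (i) homotopy invariance rel endpoints for the \emph{regularized} holonomy (this requires the flatness $dA_{z,w}+\tfrac12[A_{z,w},A_{z,w}]=0$, which is exactly \eqref{eq:DK}, plus a careful treatment of the regularization at the punctures, which is where the tangential-base-point data $|v_j/v_i|$ enters); (ii) the composition rule $\holr(\alpha\beta)=\holr(\alpha)\holr(\beta)$ up to the boundary regularization terms; (iii) the behavior under the substitutions \eqref{eq:intro_honolomiessubst}, which is the algebraic shadow of pushing $w$ onto a puncture or onto the diagonal and corresponds to the Lie algebra maps $\mathfrak{t}_{n+2}\to\mathfrak{t}_{n+1}$. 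Second, build the two homotopic paths: the left-hand side corresponds to the path where $z$ first moves from near $z_i$ out to meet $w$ near the diagonal (giving $\Phi_{\rm KZ}(t_{iz},t_{zw})$ after the $n=3$ reduction), then the pair $(z,w)$ travels together along $\gamma$ (giving $H_{zw}$, with the $|v_j/v_i|^{t_{zw}/2\pi i}e^{{\rm rot}(\gamma)t_{zw}}$ factor produced by the winding of $z$ around $w$ along $\gamma$ and by the change of framing at the endpoints), then $w$ separates and moves to near $z_j$ (giving $\Phi_{\rm KZ}(t_{zw},t_{wj})$). The right-hand side corresponds to the path where $w$ first moves directly to near $z_j$ (giving $H_w$), then $z$ runs along $\gamma$ with $w$ parked near $z_j$ (giving $H_z$) — but because $\gamma$ has self-intersections, as $z$ traverses $\gamma$ it must wind around the parked point $w$ once at each self-intersection, with sign $\varepsilon_l$; each such little loop contributes a conjugate of $e^{-\varepsilon_l t_{zw}}$, and the conjugating elements $C_l\in\exp(\mathfrak{t}_{n+2})$ are precisely the holonomies of $A_{z,w}$ along the initial segment of $\gamma$ from the base point up to the $l$-th self-intersection. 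Third, check that the two paths are homotopic in $M$ rel the tangential base points — this is a picture on $\mathbb{C}$ with two moving points and is essentially Drinfeld's original pentagon picture "fibered over $\gamma$", and conclude by equating holonomies.

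\emph{Main obstacle.} The genuine difficulty is not the homotopy picture but the \emph{regularization bookkeeping}: the factors $\Phi_{\rm KZ}$, $|v_j/v_i|^{t_{zw}/2\pi i}$, and $e^{{\rm rot}(\gamma)t_{zw}}$ are all artifacts of how one renormalizes the divergent holonomy integral as $z\to w$ or $z\to z_k$, and one must show that the renormalizations along the two paths differ by exactly these explicit group-like elements and nothing more. This requires a precise local model for $\holr$ near a tangential base point and near the diagonal (a $d\log$ with a prescribed residue, whose regularized holonomy over a small arc of angle $\theta$ is $e^{\theta t/2\pi i}$ up to the subleading regular terms), and a careful matching of the chosen tangent vectors and framings at every juncture where a segment is inserted or removed. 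I expect the cleanest route is to prove a "concatenation with regularized endpoints" lemma once and for all, phrased so that the correction factor for splicing two regularized holonomies at a puncture is manifestly $e^{(\text{angle})\,t/2\pi i}$, and then to assemble \eqref{eq:general_pentagon} by applying that lemma finitely many times along the homotopy; the $C_l$-conjugated factors then drop out automatically from the sub-path decomposition of $\gamma$.
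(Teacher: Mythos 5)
Your overall strategy is the paper's: one works with the $\mathfrak{t}_{n+2}$-valued connection $A_{z,w}$ on the configuration space of the pair $(z,w)$, realizes all five factors as regularized holonomies along segments of two paths (equivalently, as transition factors between normalized solutions in the five asymptotic zones $((z_iz)w)z_j, \dots, (z_iz)(wz_j)$), and reduces the identity to the triviality of the holonomy around a closed contour. The $\Phi_{\rm KZ}$ factors from the blown-up three-point picture at $z_i$ and $z_j$, the factor $|v_j/v_i|^{t_{zw}/2\pi i}e^{{\rm rot}(\gamma)t_{zw}}$ from the rotation of the infinitesimal vector $w-z$ along the diagonal motion, and the identifications of $H_z$, $H_w$, $H_{zw}$ via the substitutions \eqref{eq:intro_honolomiessubst} all arise exactly as you describe; your ``regularization bookkeeping'' is where the paper spends its effort (Lemmas \ref{lem:solutions} and \ref{lem:existence_sol} and the computations \eqref{eq:H_w}--\eqref{eq:Phi2}), and your sketch of that part is sound.

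The one step that would fail as written is the treatment of the self-intersections. The two paths you compare are \emph{not} homotopic in your space $M$ rel the tangential base points once $m>0$, and the corrections do not come from ``$z$ winding around the parked point $w$'': with $w$ parked near $z_j$, nothing forces $z$ to encircle $w$ when $z$ passes through a self-intersection of $\gamma$ located elsewhere, so the little loops you insert have no reason to appear on that leg of the path. The correct mechanism is seen on the parameter triangle $\{0\le t\le s\le 1\}$: the pullback of $A_{z,w}$ under $\gamma^{(2)}$ is singular at the interior points $(t_l,s_l)$, where $\gamma(t_l)=\gamma(s_l)$ forces $z=w$; the contour through the five asymptotic corners is contractible only in the complement of these points, and the discrepancy between the two sides is precisely the ordered product of the local monodromies $C_l^{-1}e^{-\varepsilon_l t_{zw}}C_l$ based at the corner $(t,s)=(0,1)$. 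Consequently $C_l$ is the regularized holonomy along the segment from $(0,1)$ to $(t_l,s_l)$, i.e.\ a path on which $z$ moves forward to $\gamma(t_l)$ while \emph{simultaneously} $w$ moves backward to $\gamma(s_l)$ (compare Theorem \ref{thm:C_l}), not the holonomy of an initial segment of $\gamma$ alone, and the sign $\varepsilon_l=\mathrm{sign}\,\mathrm{Im}\left(\dot\gamma(s_l)/\dot\gamma(t_l)\right)$ records on which side of the line $L_l$ the contour passes. Since the theorem only asserts existence of $C_l\in\exp(\mathfrak{t}_{n+2})$, your imprecision about $C_l$ itself is harmless, but the claim ``the two paths are homotopic in $M$'' must be replaced by this monodromy analysis; as stated it is false exactly when the middle factors on the right-hand side are nontrivial.
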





For the case of $n=2, z_1=0, z_2=1$ and $\gamma=[0,1]$, we recover Drinfeld's pentagon equation \eqref{eq:Drinfeld_Pentagon}. In the general case, equation \eqref{eq:general_pentagon} contains the holonomy $H$, the KZ associator $\Phi_{\rm KZ}$, as well as new terms $|v_j/v_i|^{t_{zw}/2\pi i}$ and $e^{\text{rot}(\gamma)t_{z,w}}$ depending on the tangential base points and on the rotation number of the path $\gamma$, as well as $(C_{l}^{-1}e^{-\varepsilon_l t_{z,w}}C_{l})$ which correspond to self-intersections
\[
\gamma(t_l) = \gamma(s_l), \quad 0 \leq t_l < s_l \leq 1
\]
of $\gamma$. We obtain the following partial information on $C_{l}$.
\begin{Th}[see Theorem \ref{thm:C_l}]
    We have,
    \begin{equation}
        C_l = {\rm Hol}^{\rm reg}(A_z, \gamma_{[0,t_l]}) 
        {\rm Hol}^{\rm reg}(A_w, \gamma_{[1,s_l]}) + O(t_{zw})
    \end{equation}
\end{Th}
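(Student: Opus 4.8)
The plan is to prove the formula for $C_l$ by isolating the contribution of the $l$-th self-intersection to equation \eqref{eq:general_pentagon} and then reading off the leading term in $t_{zw}$. The starting point is the observation that, in the configuration space with the extra point $w$, the two sides of the generalized pentagon describe two homotopic ways of transporting the pair $(z,w)$: on the left, $z$ and $w$ move (essentially) together along $\gamma$, while on the right $z$ moves along $\gamma$ and $w$ moves along $\gamma$ "independently", so that $H_z$ and $H_w$ appear as separate factors. A transverse self-intersection $\gamma(t_l)=\gamma(s_l)$ is precisely the place where the strand carrying $z$ and the strand carrying $w$ must be braided around one another; the factor $C_l^{-1} e^{-\varepsilon_l t_{zw}} C_l$ is the holonomy of that local braiding, conjugated by the holonomy accumulated along the path from the base points up to the crossing. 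Thus $C_l$ should be the product of the holonomy of the $z$-strand from its starting base point to the crossing at parameter $t_l$ with the holonomy of the $w$-strand from its starting base point to the crossing at parameter $s_l$.

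First I would set up the degeneration. Choose small disjoint disks around each self-intersection point $\gamma(t_l)=\gamma(s_l)$, and split $\gamma$ into the pieces inside and outside these disks. Outside the disks, the $z$ and $w$ strands run parallel and close together (we take $w=z+\delta$ for a small fixed displacement, or more invariantly let $w$ run along a pushoff of $\gamma$ in the blackboard framing), so the holonomies with $z$ and $w$ both present differ from $H$, $H_z$, $H_w$ only by terms we can control; inside each disk we have a standard positive or negative crossing of two strands, whose regularized holonomy in $\mathfrak{t}_{n+2}$ is, to leading order, $e^{-\varepsilon_l t_{zw}}$ times the contributions of the other (spectator) strands, which commute with $t_{zw}$ modulo higher order. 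Concatenating these pieces and comparing with \eqref{eq:general_pentagon}, the braiding exponential emerging from the $l$-th disk appears conjugated by exactly the regularized holonomy transported from the two relevant base points to the two preimages of the crossing point. Setting $t_{zw}=0$ elsewhere, that transported holonomy is ${\rm Hol}^{\rm reg}(A_z,\gamma_{[0,t_l]}){\rm Hol}^{\rm reg}(A_w,\gamma_{[1,s_l]})$, which is the claimed leading term.

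Concretely the key steps are: (i) fix the pushoff/displacement of $w$ and verify that in the limit $\delta\to 0$ (equivalently, working modulo $t_{zw}$ at the relevant place) $H_z$, $H_w$, $H_{zw}$ reduce consistently to substitutions of $H$, as already recorded in \eqref{eq:intro_honolomiessubst}; (ii) localize the holonomy as a concatenation over the disks and their complements, using multiplicativity of regularized holonomy under concatenation of paths and the compatibility of regularization at the tangential base points; (iii) compute the local model at a transverse crossing, showing the holonomy of the two-strand crossing is $e^{-\varepsilon_l t_{zw}}$ up to terms in the ideal generated by $t_{zw}$ of higher order and up to spectator-strand factors that drop out after the comparison; (iv) identify the conjugating element by tracking which sub-path of $\gamma$ each of $z$ and $w$ has traversed when it reaches the crossing — $z$ from $\gamma(0)$ to $\gamma(t_l)$ and $w$ from $\gamma(1)$ back to $\gamma(s_l)$ — and conclude. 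Orientation bookkeeping gives the sign $\varepsilon_l$ and the direction ($[1,s_l]$ rather than $[0,s_l]$) in which the $w$-holonomy is taken.

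The main obstacle I expect is step (iii) together with the precise sense of "$+O(t_{zw})$": one must show that all the corrections — the failure of the $z$- and $w$-strands to be exactly parallel away from the crossings, the regularization at tangential base points in the presence of the extra point $w$, and the interaction of the crossing with spectator strands — contribute only to the ideal generated by $t_{zw}$, so that they do not pollute the leading term. This is essentially a careful first-order (in $t_{zw}$) expansion of the KZ connection $A_z$ of \eqref{eq:intro_KZ} with the extra point $w$ adjacent to $z$, combined with the braid-group interpretation underlying the Mac Lane coherence theorem \ref{th:coherence}; the geometry is straightforward but keeping the regularized holonomies honest across the tangential base points is where the real work lies.
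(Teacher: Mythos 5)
Your intuition for why the formula should hold --- that modulo the ideal generated by $t_{zw}$ the $z$- and $w$-strands decouple and each carries the holonomy of its own sub-path up to the crossing --- is exactly the right one. But the route you propose has a genuine gap: you try to \emph{derive} $C_l$ by ``isolating the contribution of the $l$-th self-intersection'' in equation \eqref{eq:general_pentagon}. The pentagon equation only sees $C_l$ through the conjugated factor $C_l^{-1}e^{-\varepsilon_l t_{zw}}C_l$, which reduces to $1$ modulo $t_{zw}$ and, at first order in $t_{zw}$, depends on $C_l$ only up to left multiplication by anything centralizing $t_{zw}$ (e.g.\ all $t_{k,zw}$). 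So no amount of localizing, degenerating, or braid-theoretic bookkeeping applied to the identity itself can determine $\pi(C_l)$; the statement is only well-posed relative to a concrete definition of $C_l$, which your argument never supplies. In the paper, $C_l$ is defined as a specific regularized holonomy $\Psi_{a_l,u}^{-1}\Psi_{5,l+1}$ of $A_{z,w}$ along the segment $L_l$ joining the corner $(t,s)=(0,1)$ of the triangle to the crossing $(t_l,s_l)$, with $\Psi_{a_l,u}$ and $\Psi_{5,l+1}$ normalized solutions at those two points. Your step (iv) asserts the answer for the conjugating element, but without that definition there is nothing to prove it against.

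Once the definition is in hand, the proof is far shorter than what you outline and requires none of the pushoff $w=z+\delta$, local crossing models, or spectator-strand estimates. One passes to the quotient $\tau=\mathfrak{t}_{n+2}/\langle t_{zw}\rangle\cong\mathfrak{f}_z\oplus\mathfrak{f}_w$; there the connection splits exactly, $\pi(A_{z,w})=\pi(A_z)+\pi(A_w)$, the self-intersection point becomes a regular base point with $\pi(\Psi_{a_l,d})(a_l,a_l)=1$, and the asymptotic normalization of $\Psi_{5,l}$ factors into the two tangential base-point solutions $\Psi_{(z_i,v_i)}(z)\Psi_{(z_j,v_j)}(w)$. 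The ratio then factors as
\begin{equation*}
\pi(C_l)=\bigl(\Psi_{a_l}(z)^{-1}\Psi_{(z_i,v_i)}(z)\bigr)\bigl(\Psi_{a_l}(w)^{-1}\Psi_{(z_j,v_j)}(w)\bigr)
={\rm Hol}^{\rm reg}(\pi(A_z),\gamma_{[0,t_l]})\,{\rm Hol}^{\rm reg}(\pi(A_w),\gamma_{[1,s_l]}),
\end{equation*}
which is the claim; note also that ``$+O(t_{zw})$'' here means equality in the quotient, an exact statement, not a first-order expansion to be estimated. The corrections you worry about in your step (iii) therefore never arise.
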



Note that Theorem \ref{thm:main} opens the following new perspective on the topic: one can use the full KZ connection for $n+2$ points to bring the configuration $\{ z_1, \dots, z_n\}$ to a tangential base point. Then, by Mac Lane Coherence Theorem equation \eqref{eq:general_pentagon} should reduce to some equality of braids\footnote{We are grateful to D. Bar-Natan for this remark.} (and eventually to a number of pentagon and hexagon constraints). This new interpretation should include combinatorial expressions for the rotation number, indices of intersection points and explicit presentations of the elements $C_l$. It may possibly be related to the work of G. Massuyeau \cite{Massuyeau} in which a similar geometric setup was considered for applications to the Goldman-Turaev Lie bialgebra.
We hope to return to this interesting point of view in future work.

In a forthcoming work, we plan to apply Theorem \ref{thm:main} for computing van den Bergh double brackets of universal regularized holonomies ${\rm Hol}^{\rm reg}(A_z, \gamma)$ (with values in a free associative algebra), and of Poisson brackets of the corresponding regularized holonomies for the Lie algebra $\mathfrak{g}={\rm gl}(N, \mathbb{C})$.

The structure of the paper is as follows. In Section \ref{sub:local_KZ}, we discuss local solutions of the KZ equation near regular and tangential base points. In Section \ref{sub:holonomy_maps}, we define regularized holonomies, and in Section \ref{sub:composition} we recall their basic property under composition of paths.
 In Section \ref{sub:regions_local_solution}, we follow Drinfeld and construct local solutions of the KZ equation near tangential base points.
 In Section \ref{sub:self_intersection_contributions}, we analyse solutions of KZ equations self-intersection points of the path and prove the generalized Pentagon equation. Finally in subsection \ref{sub:property_C_l}, we discuss the property of the terms $C_l$ in the equation.
 
 \vskip 0.2cm
 
 {\bf Acknowledgements.} We are grateful to D. Bar-Natan, F. Brown and B. Enriquez for interesting discussions. 
 Reserach of A.A. and M.R. was supported in part by the grants number 208235 and 200400 and by the National Center for Competence in Research (NCCR) SwissMAP of the Swiss National Science Foundation.
 Research of A.A. was supported in part  by the award of the Simons Foundation to the Hamilton Mathematics Institute of the Trinity College Dublin under the program ``Targeted Grants to Institutes''.

\section{Regularized holonomy}\label{sec:regularized_holonomy}

In this Section, we define regularized holonomies of the KZ equation and recall their basic properties.

\subsection{Local solutions of the KZ equation} \label{sub:local_KZ}

Let $\mathfrak{t}_{n+1}$ be the Drinfeld-Kohno Lie algebra in $n+1$ strands. 
Its generators are $t_{i,j}$ for $i,j \in \{ 1, \dots, n, z \}$ and $z$ stands for $n+1$,
and the relations are given by \eqref{eq:DK}.
Recall that the generators
$t_{i,z}$ for $i=1, \dots, n$ span a free Lie subalgebra $\mathfrak{f}_n$ of $\mathfrak{t}_{n+1}$.
\begin{Rem}
Notice that the symbol $z$ denotes both the last strand and the (coordinate of the) point moving on the surface.
\end{Rem}

For the surface $\Sigma = \mathbb{C}\backslash \{ z_1, \dots, z_n \}$, we consider 
the differential equation 
\begin{equation}\label{eq:KZ}
d\Psi=A_z\Psi,
\end{equation}
where $A_z$ is the flat connection given by \eqref{eq:intro_KZ} and 
 solutions $\Psi(z) \in U\mathfrak{f}_n \cong \mathbb{C}\langle\langle t_{1,z}, \dots, t_{n,z}\rangle\rangle$ take values in the (completion of) the universal enveloping algebra of $\mathfrak{f}_n$.

We will associate local solutions of the equation \eqref{eq:KZ} to regular points $p \in \mathbb{C}\backslash \{ z_1, \dots, z_n\}$ and to tangential base points $(z_i, v_i)$, where $0 \neq v_i \in \mathbb{C}$ is a tangent vector at $z_i$. To a regular point $p$, we associate the unique local solution $\Psi_p(z)$  which satisfies the initial condition $\Psi_p(p)=1$.

For a tangential base point $(z_i, v_i)$, let $B_\varepsilon(z_i)$ be a small open disk of radius $\varepsilon$ around $z_i$, and let 
$l_i=\{ z_i - sv_i; s\in \mathbb{R}_{\geq 0}\}$ be a ray emanating from $z_i$ in the direction opposite to $v_i$. 
We denote by $D_i=B_\varepsilon(z_i)\backslash l_i$ the simply connected domain obtained by deleting the ray $l_i$ from the disk $B_\epsilon(z_i)$.

\begin{Lemma}  \label{lem:solutions}
For $\varepsilon$  sufficiently small, there is a unique solution $\Psi_{z_i, v_i}(z)$ of the differential equation \eqref{eq:KZ} on the domain $D_i$ such that
\begin{equation}
\Psi_{z_i,v_i}(z)=f\left(\frac{z-z_i}{v_i}\right)\exp\left(\frac{t_{i,z}}{2\pi  i}\log\left(\frac{z-z_i}{v_i}\right)\right),
\end{equation}
where $f$ is an analytic function on $B_\varepsilon(0)$ with $f(0)=1$.
\end{Lemma}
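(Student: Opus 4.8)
The plan is to construct the solution by the standard Frobenius-type method for a Fuchsian ODE with a regular singular point at $z_i$, and then to upgrade the formal construction to a convergent one on a small enough disk. First I would change coordinates by setting $u = (z-z_i)/v_i$, so that the singular point moves to $u=0$ and the ray $l_i$ becomes the negative real axis, giving the simply connected slit disk $\{u : |u| < \varepsilon/|v_i|,\ u \notin \R_{\le 0}\}$ on which $\log u$ has a single-valued branch. Write the KZ connection \eqref{eq:intro_KZ} as $A_z = \left(\frac{t_{i,z}}{2\pi i}\right)\frac{du}{u} + \omega$, where $\omega = \sum_{k\ne i}\frac{t_{k,z}}{2\pi i}\, d\log(z - z_k)$ is holomorphic in a neighborhood of $u=0$ (since the other punctures $z_k$ are at positive distance from $z_i$); this is the decomposition into the singular part and the regular remainder.

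Next I would make the ansatz $\Psi = f(u)\exp\!\left(\frac{t_{i,z}}{2\pi i}\log u\right)$ and substitute into $d\Psi = A_z\Psi$. Because $d\log u$ commutes with $\frac{t_{i,z}}{2\pi i}\log u$, the singular terms cancel, and one is left with an equation of the form
\begin{equation}
df = \omega f + \frac{1}{2\pi i}\,\frac{du}{u}\left[\, f,\ t_{i,z}\,\right],
\end{equation}
or, written out, $u\,\partial_u f = u\,\tilde\omega\, f + \frac{1}{2\pi i}[f, t_{i,z}]$ where $\tilde\omega$ is holomorphic. Expanding $f = \sum_{N\ge 0} f_N$ in the grading of $U\mathfrak{f}_n$ by degree in the $t_{k,z}$ (equivalently, working order by order in the filtration), the operator $f_N \mapsto u\partial_u f_N - \frac{1}{2\pi i}[f_N, t_{i,z}]$ is, on each graded piece, of the form (Euler operator) minus (a nilpotent operator), hence invertible on power series with no constant term; this lets me solve recursively for the Taylor coefficients of each $f_N$, with $f_0 = 1$ forced by $f(0)=1$ and the requirement that the answer have no extra logarithms. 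Uniqueness follows from the same recursion: any solution of the stated form has Taylor coefficients determined by these equations.

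The main obstacle is \emph{convergence}: the recursion produces a formal power series $f(u) = \sum_k f^{(k)} u^k$ with coefficients $f^{(k)}$ living in the completed algebra $U\mathfrak{f}_n$, and one must show it converges for $|u|$ small, i.e. for $\varepsilon$ small. The cleanest route is to bound things degree by degree in $U\mathfrak{f}_n$: on the degree-$N$ part the extra twist $\frac{1}{2\pi i}[\,\cdot\,, t_{i,z}]$ only shifts the effective ``exponent'' by a nilpotent amount, so the radius of convergence of each $f_N$ is governed by the distance from $z_i$ to the nearest other puncture, uniformly in $N$; then the full $f$ converges in the adic topology on $U\mathfrak{f}_n$ for $|u| < \min_{k\ne i}|z_i - z_k|/|v_i|$, which is exactly the assertion that $\varepsilon$ can be taken sufficiently small. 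Alternatively, one can cite the general theory of Fuchsian systems with values in a pro-nilpotent algebra (as in Drinfeld's original treatment of $\Phi_{\rm KZ}$), where this is standard. I would present the degree-by-degree argument, since it also makes the analyticity of $f$ on $B_\varepsilon(0)$ manifest and sets up notation reused later for the regularized holonomy.
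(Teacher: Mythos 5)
Your proposal follows essentially the same route as the paper: pass to the coordinate $u=(z-z_i)/v_i$, make the ansatz $f(u)\exp(\tfrac{t_{i,z}}{2\pi i}\log u)$, observe that the singular term leaves only the commutator $\tfrac{1}{u}[\tfrac{t_{i,z}}{2\pi i}, f]$ (note your commutator has the opposite sign to the correct one, which is harmless for the argument), and solve recursively via the invertibility of $k-\mathrm{ad}_{t_{i,z}/2\pi i}$, with convergence controlled by the distance to the other punctures. This matches the paper's proof, with your degree-by-degree convergence discussion simply spelling out the paper's one-line radius-of-convergence remark.
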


\begin{Rem}
Solutions $\Psi_{z_i, v_i}(z)$ have the following asymptotic behaviour for $z \to z_i$ in the domain $D_i$:
$$
\Psi_{z_i, v_i}(z) \sim_{z \to z_i}\left(1 + O\left(\frac{z-z_i}{v_i}\right) \right) 
\exp\left(\frac{t_{i,z}}{2\pi  i}\log\left(\frac{z-z_i}{v_i}\right)\right).
$$
This can also be used as a definition of these solutions.
\end{Rem}

\begin{proof}
For convenience of the reader, we recall the proof. Let $w=\frac{z-z_i}{v_i}$. By plugging the expression $f(w)w^{t_{iz}/2\pi i}$ into equation \eqref{eq:KZ}, we get
\begin{align}
v_i^{-1}\left(f'(w)+f(w) \frac{t_{i,z}}{2\pi i w}\right)=
v_i^{-1} \frac{t_{i, z}}{2\pi i w} f(w) + \sum_{j \neq i}
\frac{t_{j, z}}{2\pi i (z-z_j)} f(w).
\end{align}
This yields
$$
f'(w) - \frac{1}{w} \left[\frac{t_{i, z}}{2\pi i}, f(w)\right] = g(w),
$$
where $g(w)=\sum_k g_k w^k$ is an analytic function near zero. 

We are looking for a solution in the form $f(w)=\sum_k f_k w^k$ with $f_0=1$. 
Since $[t_{i,z},f_0]=[t_{i, z}, 1]=0$, 
the expression $\frac{1}{w} \left[\frac{t_{i, z}}{2\pi i}, f(w)\right]$ is actually regular at $w=0$. Then, we find the coefficients $f_k$ for $k \geq 1$:
$$
f_k = (k - {\rm ad}_{t_{i,z}/2\pi i})^{-1}g_{k-1}.
$$
This implies that the convergence radius of the power series $f$ is greater or equal to the one of the power series $g$, and hence $f$ defines an analytic function near the origin, as required.
\end{proof}

\begin{Lemma}     \label{lem:Psi_grouplike}
    Local solutions $\Psi_p(z)$ and $\Psi_{z_i, v_i}(z)$ are group like elements of the (completed) universal enveloping algebra $U\mathfrak{f}_n$.
\end{Lemma}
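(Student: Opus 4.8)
The plan is to show that group-likeness is a property that holds at one point and is preserved by the flat KZ equation, then verify it for the specific initial conditions of $\Psi_p$ and $\Psi_{z_i,v_i}$. Recall that an element $x$ of the completed Hopf algebra $U\mathfrak{f}_n$ is group-like iff $\Delta(x) = x \otimes x$ (with the completed tensor product), equivalently iff $\log(x)$ is primitive, i.e.\ lies in the completion of $\mathfrak{f}_n$ itself. The key observation is that $\Delta$ is an algebra homomorphism and is continuous, so if $\Psi(z)$ solves $d\Psi = A_z \Psi$ with $A_z$ taking values in $\mathfrak{f}_n$ (hence $\Delta(A_z) = A_z \otimes 1 + 1 \otimes A_z$, acting as a derivation), then both $\Delta(\Psi)$ and $(\Psi \otimes \Psi)$ satisfy the same linear matrix ODE on $U\mathfrak{f}_n \widehat{\otimes} U\mathfrak{f}_n$: indeed $d(\Delta\Psi) = \Delta(A_z\Psi) = \Delta(A_z)\Delta(\Psi)$, while $d(\Psi\otimes\Psi) = (d\Psi)\otimes\Psi + \Psi\otimes(d\Psi) = (A_z\otimes 1 + 1\otimes A_z)(\Psi\otimes\Psi) = \Delta(A_z)(\Psi\otimes\Psi)$.

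First I would treat $\Psi_p(z)$: since $\Psi_p(p) = 1$ is group-like, and $\Delta\Psi_p$ and $\Psi_p\otimes\Psi_p$ both solve the above ODE with the same value $1\otimes 1$ at $z = p$, uniqueness of solutions of linear ODEs on the complete filtered vector space $U\mathfrak{f}_n \widehat{\otimes} U\mathfrak{f}_n$ (working order by order in the lower central series filtration, exactly as in the proof of Lemma~\ref{lem:solutions}) forces $\Delta\Psi_p = \Psi_p\otimes\Psi_p$ on all of $\mathbb{C}\backslash\{z_1,\dots,z_n\}$. Hence $\Psi_p(z)$ is group-like for every $z$.

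For the tangential base point solution $\Psi_{z_i,v_i}(z)$ the same ODE argument applies on the simply connected domain $D_i$, so it suffices to check group-likeness in the asymptotic regime $z \to z_i$. There $\Psi_{z_i,v_i}(z) = f(w)\, \exp\!\big(\tfrac{t_{i,z}}{2\pi i}\log w\big)$ with $w = (z-z_i)/v_i$. The factor $\exp\!\big(\tfrac{t_{i,z}}{2\pi i}\log w\big)$ is group-like because $t_{i,z}$ is primitive (it is a generator of $\mathfrak{f}_n$, hence lies in $\mathfrak{f}_n$), and the product of group-like elements is group-like; so I would be done once I show $f(w)$ is group-like. This is the step I expect to be the main obstacle, since $f$ is only defined through its recursion. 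I would handle it by the same transport-of-structure idea: $\Psi_{z_i,v_i}$ group-like $\Longleftrightarrow$ $f(w) = \Psi_{z_i,v_i}(z)\exp\!\big(-\tfrac{t_{i,z}}{2\pi i}\log w\big)$ group-like, and using that $\Delta$ intertwines the ODE, one shows $\Delta f$ and $f\otimes f$ satisfy the same regular singular ODE $f' - \tfrac1w[\tfrac{t_{i,z}}{2\pi i}, f] = g$ appearing in Lemma~\ref{lem:solutions} (pushed forward along $\Delta$) with the same initial value $f(0) = 1 = 1\otimes 1$; the coefficientwise uniqueness established there (the operators $k - \mathrm{ad}_{t_{i,z}/2\pi i}$ being invertible) then gives $\Delta f = f\otimes f$. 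Alternatively, and perhaps more cleanly, one observes that $f(0)=1$ is group-like and applies the general principle that a solution of a linear ODE with group-like initial condition and primitive-valued connection stays group-like, which is precisely what the preceding computation establishes. Either way the proof reduces entirely to the uniqueness statement already proved in Lemma~\ref{lem:solutions}.
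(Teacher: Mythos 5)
Your proposal is correct and follows essentially the same route as the paper: for $\Psi_p$ one shows $\Delta\Psi_p$ and $\Psi_p\otimes\Psi_p$ solve the same ODE with initial value $1\otimes 1$, and for $\Psi_{z_i,v_i}$ one factors out the group-like exponential and applies the same uniqueness argument (via the invertibility of $k-\mathrm{ad}$) to the regular-singular equation satisfied by $\Delta f$ and $f\otimes f$. No gaps.
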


\begin{proof}
For solutions at  regular points, observe that the KZ connection is linear in generators of the Lie algebra $\mathfrak{t}_{n+1}$. Hence, $\Delta(A_z) = A_z \otimes 1 + 1 \otimes A_z$, where $\Delta$ is the standard coproduct. This implies that  $\Delta(\Psi_p)$ and $\Psi_p \otimes \Psi_p$ satisfy the same differential equation 
$$
d \hat{\Psi}(z) = (A_z \otimes 1 + 1 \otimes A_z) \hat{\Psi}(z)
$$
with initial condition $\hat{\Psi}(p)=1 \otimes 1$. Then, by the uniqueness property of solutions of first order ODEs we obtain $\Delta(\Psi_p)=\Psi_p \otimes \Psi_p$, as required.

Similarly, for a local solutions $\Psi_{z_i,v_i}$ we have
\begin{equation}
\Psi_{z_i,v_i}(z)=f\left(\frac{z-z_i}{v_i}\right)\exp\left(\frac{t_{i,z}}{2\pi  i}\log\left(\frac{z-z_i}{v_i}\right)\right),
\end{equation}
and we observe that $\Delta(f(w))$ and $f(w)\otimes f(w)$ satisfy the same differential equation
\begin{align}\label{eq-coproduct}
\hat{f}'(w)- \frac{1}{w} \left[\frac{t_{i, z}\otimes 1+1\otimes t_{i,z}}{2\pi i}, \hat{f}(w)\right] =\sum_{j \neq i}
\frac{v_i(t_{j, z}\otimes 1+1\otimes t_{j, z})}{2\pi i (z-z_j)} \hat{f}(w).
\end{align}
An argument similar to the proof of existence of solutions implies 
 uniqueness of an analytic solution of this equation with the initial value $\hat{f}(0)=1$. Hence, we can conclude as above.


\end{proof}

\subsection{Holonomy maps}       \label{sub:holonomy_maps}

Let $\gamma: [0,1] \to \mathbb{C}$ be a smooth path with $\dot{\gamma}= \tfrac{d\gamma}{dt} \neq 0$ which may start and end at regular or marked points but with $\gamma(t) \in \Sigma =\mathbb{C}\backslash \{ z_1, \dots, z_n\}$ for all $0<t<1$.
In case when $\gamma$ starts or ends at marked points (or both), we require that near that point $\gamma(t)$ be linear in $t$:
$$
\begin{array}{ll}
\gamma(t) = z_i + v_i t & {\rm for} \,\, 0< t < \varepsilon; \\
\gamma(t) = z_j + v_j(1-t) & {\rm for} \,\, 0< 1-t < \varepsilon.
\end{array}
$$
Here $(z_i, v_i)$ is the tangential starting point of $\gamma$, $(z_j, v_j)$ is the tangential end point of $\gamma$, and $\varepsilon$ is small enough. The path $\gamma(t)$ may have a finite number of transverse self-intersection points, $\gamma(s_l) = \gamma(t_l) = a_l$ for $l=1, \dots, m$. Then, we also require that $\gamma(t)$ be a linear function near the points $s_l$ and $t_l$.

To each path $\gamma(t)$, we associate two local solutions $\Psi_p(z)$ and $\Psi_q(z)$ corresponding to its end points (regular or tangential). For regular end points, we have
$p=\gamma(0), q = \gamma(1)$, and for tangential end points we have
$(p=(\gamma(0) ,\dot{\gamma}(0)), q = (\gamma(1), -\dot{\gamma}(1))$.
The solutions $\Psi_p(z)$ and $\Psi_q(z)$ can be analytically continued to a strip around the path $\gamma$. 
More precisely, we extend $\gamma \colon (0,1) \to \Sigma$ to a local diffeomorphism $\tilde{\gamma} \colon (0,1) \times (-\varepsilon, \varepsilon) \to \Sigma$. Note that the local solution $\Psi_p(z)$ defines a flat section of the pulled back connection $\tilde{\gamma}^*A_z$ on a neighborhood of $(0,0) \in [0,1) \times (-\varepsilon,\varepsilon)$ which can then be analytically continued to a solution on the entire strip. The same applies to the local solution $\Psi_q(z)$. By abuse of notation, we denote analytic continuations of local solutions by the same symbols. Note that if the path $\gamma$ has self-intersections, the self-intersection point has two pre-images in the strip $(0,1) \times (-\varepsilon, \varepsilon)$, and the values of analytically continued solutions at these points are different, in general.

\begin{Def}
For a path $\gamma$, the (regularized) holonomy of $A_z$ along $\gamma$
is given by
\begin{equation}\label{eq:def_regularized_hol}
\holr(A_z,\gamma):=\Psi_q^{-1}(z)\Psi_p(z)
\in U\mathfrak{f}_n,
\end{equation}
where $z$ is any point in the strip around $\gamma$. 
\end{Def}

When both end points of $\gamma$ are regular, this definition coincides with the standard definition of a parallel transport of a connection (on a trivial bundle). In particular, in this case the holonomy is invariant under homotopies which preserve end points of the path. Similarly, for tangential end points the regularized holonomy is invariant under homotopies which preserve $(\gamma(0), \dot{\gamma}(0))$ for the starting point and $(\gamma(1), -\dot{\gamma}(1))$ for the end point.


%
%
%
%
   

\begin{Ex}      \label{ex:associator}
Consider $\Sigma = \C \setminus \{ 0 ,  1 \}$, tangential base points $(0, 1)$ and $(1, -1)$,
and $\gamma(t)=t$ the straight segment ({\em le droit chemin}) connecting $0$ to $1$.
We have two local solutions with asymptotic behavior
$$
\Psi_{0,1}(z)\sim z^{\frac{t_{1,z}}{2\pi i}}, \hskip 0.3cm \Psi_{1,-1}(z)\sim (1-z)^{\frac{t_{2,z}}{2\pi i}}.
$$
In this case, $\holr(A_z,[0,1])$ agrees with Drinfeld's definition of the KZ associator
$$
\Phi_{\rm KZ}(t_{1,z},t_{2,z})=\Psi_{1,-1}(z)^{-1} \Psi_{0,1}(z).
$$
\end{Ex}

\begin{Lemma}
   (Regularized) holonomies are group-like elements of the (completed) universal enveloping algebra $U\mathfrak{f}_n$.
\end{Lemma}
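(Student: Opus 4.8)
The plan is to reduce the statement to Lemma~\ref{lem:Psi_grouplike} together with the elementary fact that the group-like elements of a complete Hopf algebra form a group under multiplication. Recall that an element $u$ of $U\mathfrak{f}_n$ which is $\equiv 1$ modulo the augmentation ideal is group-like precisely when $\Delta(u)=u\otimes u$, and that such elements are closed under products and inverses, since $\Delta(uv)=\Delta(u)\Delta(v)=(u\otimes u)(v\otimes v)=uv\otimes uv$ and $\Delta(u^{-1})=\Delta(u)^{-1}=u^{-1}\otimes u^{-1}$. By definition $\holr(A_z,\gamma)=\Psi_q^{-1}(z)\,\Psi_p(z)$ for $z$ in a strip around $\gamma$, where $\Psi_p$ and $\Psi_q$ are the analytic continuations to that strip of the local solutions attached to the two end points. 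So it is enough to show that these analytic continuations are still group-like at the evaluation point $z$, and then invoke the group property above.

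The first step is therefore to recall that Lemma~\ref{lem:Psi_grouplike} already gives group-likeness of the \emph{germ} of each of $\Psi_p$ and $\Psi_q$ near its defining end point. The second step is to propagate this along the strip. Exactly as in the proof of Lemma~\ref{lem:Psi_grouplike}, one uses that $A_z$ is linear in the generators $t_{i,z}$, so $\Delta(A_z)=A_z\otimes 1+1\otimes A_z$, and hence both $\Delta(\Psi_p)$ and $\Psi_p\otimes\Psi_p$ are flat sections of the connection $d\hat{\Psi}=(A_z\otimes 1+1\otimes A_z)\hat{\Psi}$ pulled back to the strip $\widetilde\gamma\colon(0,1)\times(-\varepsilon,\varepsilon)\to\Sigma$. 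Since this strip is simply connected and the connection $\widetilde\gamma^{*}A_z$ is flat (whence so is the ``doubled'' connection), a flat section is uniquely determined by its value at one point; as $\Delta(\Psi_p)$ and $\Psi_p\otimes\Psi_p$ agree on a neighbourhood of the starting end point, they agree on the whole strip. Thus $\Psi_p(z)$ is group-like for every $z$ in the strip, and the same argument applies to $\Psi_q(z)$.

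Combining the two steps, $\holr(A_z,\gamma)=\Psi_q^{-1}(z)\,\Psi_p(z)$ is a product of a group-like element and the inverse of a group-like element, hence group-like. I expect the only point requiring any care to be the passage from the germ-level statement of Lemma~\ref{lem:Psi_grouplike} to group-likeness of the full analytic continuation, i.e.\ checking that the uniqueness argument is legitimate over the entire strip and not merely near an end point; this is immediate once one records that the strip is simply connected and the relevant connections are flat, so no further work is needed. (One could also phrase it as: the set of $z$ in the strip at which $\Psi_p(z)$ is group-like is open, closed, and nonempty, hence everything.)
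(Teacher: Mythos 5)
Your proposal is correct and follows essentially the same route as the paper's own (very brief) proof: invoke Lemma~\ref{lem:Psi_grouplike} for the local solutions, observe that group-likeness persists under analytic continuation, and use that group-like elements are closed under products and inverses. The only difference is that you spell out the continuation step via flatness and uniqueness of flat sections on the simply connected strip, which the paper leaves implicit.
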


\begin{proof}
By Lemma \ref{lem:Psi_grouplike}, local solutions are group like. This property is preserved by analytic continuations. Since holonomies are given by ratios of two analytically continued local solutions, they are group like as well.
\end{proof}

\subsection{Composition of paths and rotation number} \label{sub:composition}

Two paths $\gamma_1$ and $\gamma_2$ with end points (regular or tangential) $(p_1, q_1)$ and $(p_2, q_2)$ are called composable if $p_2 = q_1$. In that case, we can define a composition of paths $\gamma_2 \gamma_1$. 

If the common end point $p_2 = q_1$ is a regular point, the composition is defined up to homotopy by concatenation of paths (this allows to make the resulting path smooth). If the common end point is a marked point, the composition is defined modulo regular homotopy (that is, the first Reidemeister move is not allowed). In that case, there are two natural ways to define the composition. One considers a neighborhood of the marked point (see Fig. \ref{fig:composing3}), and adds a clockwise or a counter-clockwise half-turn with respect to the blackboard orientation of $\mathbb{C}$ (see Fig. \ref{fig:composing}). In this paper, we chose to use the clockwise convention.

\begin{figure}[h]
    \centering    \includegraphics[width=0.6\columnwidth]{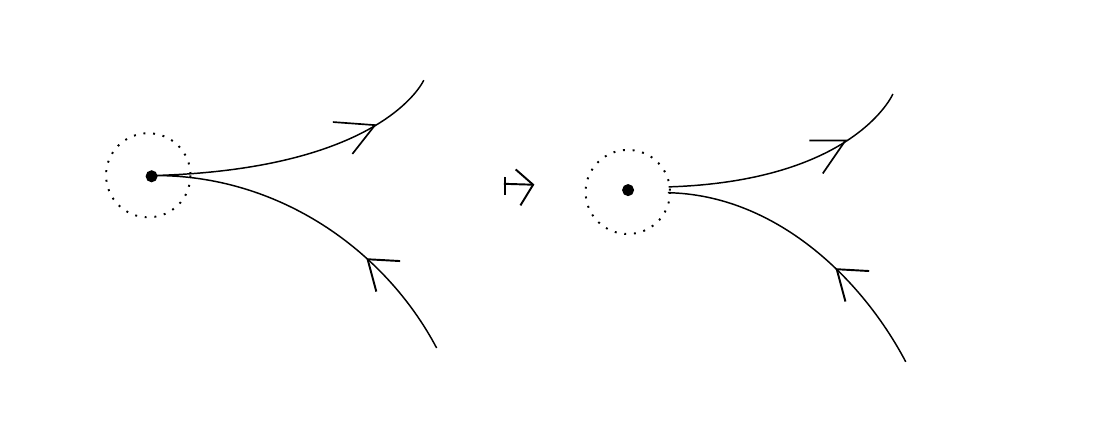}
    \caption{small region near the marked point}
    \label{fig:composing3}
\end{figure}

\begin{figure}[h]
    \centering   \includegraphics[width=0.6\columnwidth]{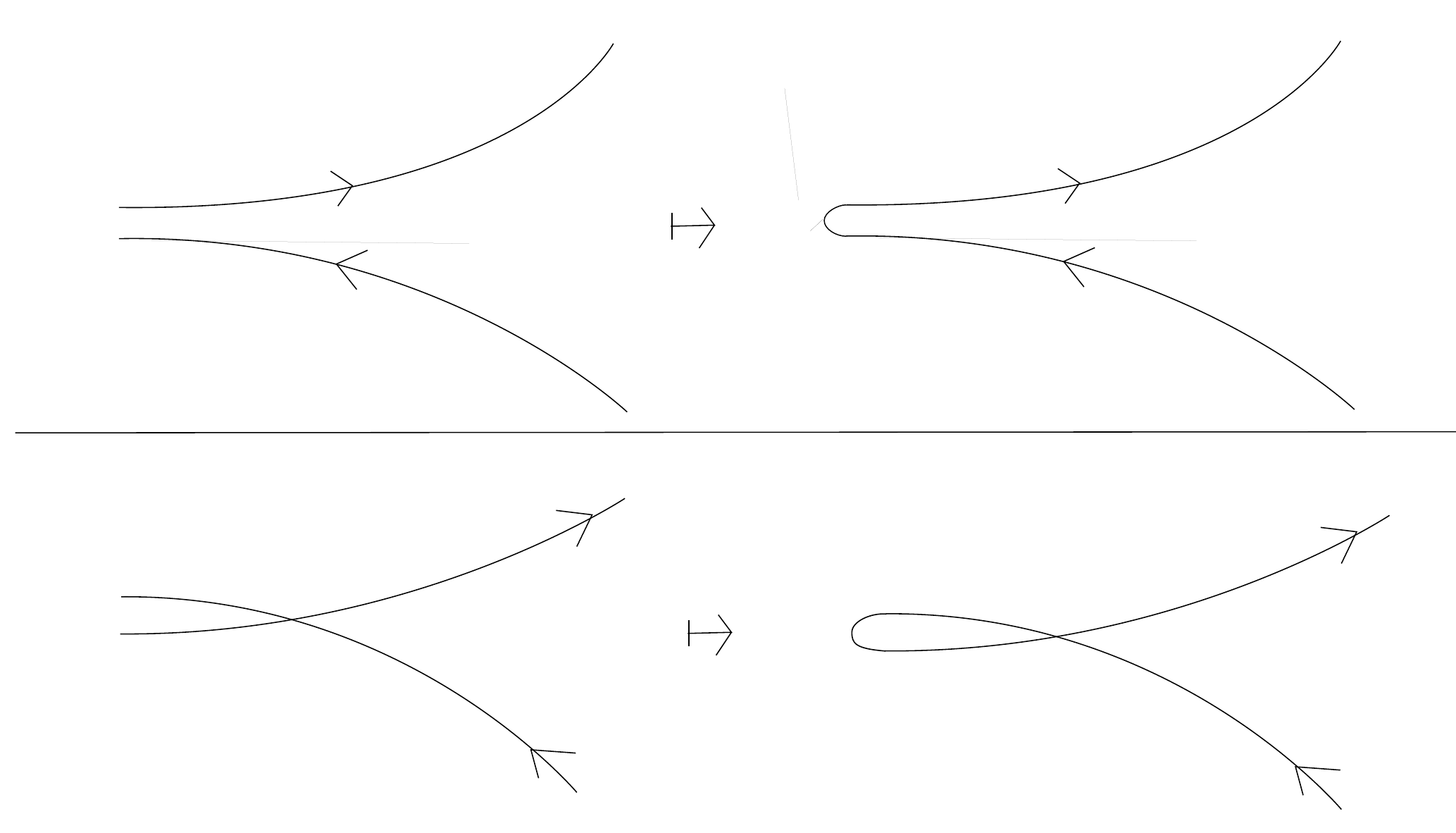}
    \caption{two ways of composing curves}
    \label{fig:composing}
\end{figure}

(Regularized) holonomies are multiplicative under composition of paths:

\begin{Th}
For composable paths $\gamma_1$ and  $\gamma_2$, we have a product formula for the corresponding (regularized) holonomies:
\begin{equation}\label{eq:composable}
{\rm Hol}^{\rm reg}(A, \gamma_2 \gamma_1) = \holr(A, \gamma_2) \holr(A, \gamma_1).
\end{equation}
\end{Th}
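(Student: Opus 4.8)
The plan is to reduce everything to the elementary identity obtained by unwinding the definition: writing the endpoints of $\gamma_1$ as $(p_1,q_1)$ and those of $\gamma_2$ as $(p_2,q_2)$ with $r:=q_1=p_2$, and denoting by $\Psi_r$ the local solution attached to $r$ — which plays the role of the terminal solution $\Psi_{q_1}$ of $\gamma_1$ and of the initial solution $\Psi_{p_2}$ of $\gamma_2$ at the same time — one has, formally, $\holr(A,\gamma_2)\holr(A,\gamma_1)=\bigl(\Psi_{q_2}^{-1}\Psi_r\bigr)\bigl(\Psi_r^{-1}\Psi_{p_1}\bigr)=\Psi_{q_2}^{-1}\Psi_{p_1}=\holr(A,\gamma_2\gamma_1)$. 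To make this rigorous one needs a point $z_0$ at which all three holonomies may be evaluated and the three solutions continued coherently. When $r$ is a regular point this is immediate: $\Psi_r$ is the unique solution with $\Psi_r(r)=1$, hence single valued on a neighbourhood of $r$, the strips around $\gamma_1$, $\gamma_2$ and $\gamma_2\gamma_1$ all contain such a neighbourhood, analytic continuation is a local operation, and the telescoping above goes through verbatim.

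When $r=(z_j,v_j)$ is a tangential base point, I would first peel off short radial segments: write $\gamma_1=\gamma_1''\gamma_1'$ and $\gamma_2=\gamma_2''\gamma_2'$ with $\gamma_1''$ (resp.\ $\gamma_2'$) the piece of $\gamma_1$ (resp.\ $\gamma_2$) running along the ray $\{z_j+sv_j\}$ and ending at (resp.\ starting from) $(z_j,v_j)$, its other endpoint a regular point $r_1$ (resp.\ $r_2$). Applying the regular case at $r_1$ and at $r_2$ reduces the statement to $\holr(A,\gamma_2'\gamma_1'')=\holr(A,\gamma_2')\holr(A,\gamma_1'')$, i.e.\ to the purely local situation of two short radial segments joined at $z_j$ through the half-turn. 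There the relation $\Psi_{r_1}=\Psi_{z_j,v_j}\,\holr(A,\gamma_1'')$ holds on the strip around $\gamma_1''$; the crucial geometric point is that the clockwise half-turn arc lies on the $v_j$-side of $z_j$, inside the domain $D_j=B_\varepsilon(z_j)\setminus l_j$ — it neither meets the slit $l_j$ nor winds around $z_j$ — so $\Psi_{z_j,v_j}$ stays single valued along it, the relation persists up to $r_2$, and multiplying on the left by $\holr(A,\gamma_2')=\Psi_{r_2}^{-1}\Psi_{z_j,v_j}$ yields the elementary identity.

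The step I expect to be the real obstacle is exactly this last geometric input, namely making ``the clockwise half-turn arc stays inside $D_j$ and does not encircle $z_j$'' into an honest statement: this calls for a careful description of the composite path near $z_j$ (including the two-sheeted structure of its parametrising strip there), of where the slit $l_j$ sits relative to the arc, and a matching of conventions — ``clockwise with respect to the blackboard orientation of $\mathbb{C}$'' against the branch of $\log$ fixed in Lemma \ref{lem:solutions}. The opposite convention would insert an arc winding once around $z_j$, and then the product would carry an extra conjugated local monodromy $e^{\pm t_{j,z}}$; so the multiplicativity statement is genuinely tied to the chosen convention, and this reduction is the only place where that choice enters.
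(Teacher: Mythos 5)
Your argument is essentially the paper's: the proof in the text is exactly the telescoping identity $\Psi_{q_2}^{-1}\Psi_r\cdot\Psi_r^{-1}\Psi_{p_1}=\Psi_{q_2}^{-1}\Psi_{p_1}$, made rigorous by evaluating all three holonomies at one common point $z$ near $\gamma_1(1)=\gamma_2(0)$, and your treatment of the tangential case (peeling off the radial tails and checking that the inserted arc stays inside the slit disk $D_j$, where $\Psi_{z_j,v_j}$ is single valued) is a correct elaboration of what the paper leaves implicit.

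One correction to your closing paragraph: the multiplicativity is \emph{not} tied to the clockwise convention. Because the incoming tail of $\gamma_1$ and the outgoing tail of $\gamma_2$ lie on the same ray $\{z_j+sv_j:\,s>0\}$, the inserted half-turn is a small U-turn based at $z_j+\varepsilon v_j$ that does not encircle $z_j$ in either convention; the two choices differ by a kink (a first Reidemeister move), which changes ${\rm rot}_\gamma$ by $1$ but not the homotopy class of the composite relative to its outer tangential endpoints, hence not the holonomy. No conjugated monodromy $e^{\pm t_{j,z}}$ appears for the opposite convention; the choice only enters the rotation-number formula ${\rm rot}_{\gamma_2\gamma_1}={\rm rot}_{\gamma_1}+{\rm rot}_{\gamma_2}-\tfrac{1}{2}$, not the composition law for holonomies.
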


\begin{proof}
We have,
$$
\holr(A_z,\gamma_1):=\Psi_{q_1}^{-1}(z)\Psi_{p_1}(z), \hskip 0.3cm
\holr(A_z,\gamma_2):=\Psi_{q_2}^{-1}(z)\Psi_{p_2}(z).
$$
By choosing the same point $z$ in both equations ({\em e.g.} in the small neighborhood of $\gamma_1(1) = \gamma_2(0)$), we obtain
$$
\holr(A_z,\gamma_2)\holr(A_z,\gamma_1) = \Psi_{q_2}^{-1}(z)\Psi_{p_1}(z) = {\rm Hol}^{\rm reg}(A, \gamma_2 \gamma_1),
$$
as required.
\end{proof}




For future use, we will need a notion of rotation number of a path. It is only defined for paths up to regular homotopy. That's why we will be using it for paths $\gamma$ with both end points being tangential end points. Hence, we have 
$$
(\gamma(0), \dot{\gamma}(0)) = (z_i, v_i), \hskip 0.3cm
(\gamma(1), \dot{\gamma}(1)) = (z_j, -v_j)
$$
for some marked points $z_i, z_j$ and $v_i=\rho_i e^{i \varphi_i},
v_j = \rho_j e^{i\varphi_j}$. We compute,
\begin{equation}       \label{eq:rot}
\int_0^1 d\log(\dot{\gamma}(t)) = \log \ \left| \frac{v_j}{v_i} \right| + 2 \pi i \ \rm{rot}_{\gamma},
\end{equation}
where the imaginary part of this expression is the rotation number of $\gamma$. In particular, we have
$$
\rm{rot}_{\gamma} \in \mathbb{Z} + \frac{1}{2} + \frac{1}{2\pi}(\varphi_j - \varphi_i).
$$
%
Note that ${\rm rot}_\gamma \in \mathbb{R}$, and that one full turn in the anticlockwise direction corresponding to ${\rm rot}_\gamma=1$. For the composition of paths, we obtain
$$
{\rm rot}_{\gamma_2 \gamma_1} =
{\rm rot}_{\gamma_1} + {\rm rot}_{\gamma_2}- \frac{1}{2},
$$
where $-1/2$ on the right hand side comes from the extra clockwise half-turn in the definition of the path composition $\gamma_2\gamma_1$.

Note that one can consistently choose $v=1$ in all tangential base points.  In that case, all rotation numbers of paths are half-integers:
${\rm rot}_\gamma \in \mathbb{Z} + 1/2$. Furthermore,  the regular fundamental group $\pi^{\rm reg}_1(\Sigma, (z_i, v_i))$ surjects onto the ordinary fundamental group $\pi_1(\Sigma, z_i)$.





\section{Proof of the generalized pentagon equation}\label{sec:Generalized_Pentagon_Equation}

In this Section, we prove Theorem \ref{thm:main}. We adapt Drinfeld's original proof to the case of many marked points and to paths which may have self-intersections.

\subsection{Asymptotic regions and local solutions}
\label{sub:regions_local_solution}
We consider the KZ connection for two moving points $z$ and $w$ in $\Sigma = \C \setminus \{ z_1, \dots, z_n \}$ with values in $\mathfrak{t}_{n+2}$:
\begin{equation}\label{eq:A_zw}
A_{z,w}= \frac{1}{2\pi i}\left(\sum_{k=1}^n (t_{k,z} d\log(z-z_k)
+ t_{k,w}d\log(w-z_k)) + t_{z,w} d\log(z-w)\right),
\end{equation}
and the corresponding KZ equation
\begin{equation}  \label{eq:KZzw}
    d\Psi = A_{z,w}\Psi.
\end{equation}

Let $\gamma: (0,1) \to \Sigma = \mathbb{C} \backslash \{ z_1, \dots, z_n\}$ be a path with a tangential starting point $(z_i, v_i)$, a tangential end point $(z_j, v_j)$, and a finite number of transverse self-intersections $a_l=\gamma(s_l) = \gamma(t_l)$ for $l=1, \dots, m$. As before, we assume that the path $\gamma$ is linear near end points and near self-intersection points.

The triangle  $\{(t,s)|0\le t\le s\le 1\}\subset \mathbb{R}^2$ in the $t-s$ plane is mapped to the set of ordered pairs of points on $\gamma$: 
$$
\gamma^{(2)}: (t,s) \mapsto (z=\gamma(t), w=\gamma(s)).
$$
Following Drinfeld's idea (see \cite{Drinfeld1990}, and also \cite{Kassel1995}), we evaluate regularized holonomies on different parts of the closed path shown on Fig. \ref{fig:path}. Since this path is contractible, the product of regularized holonomies is equal to 1, and this identity will give rise to the generalized pentagon equation.

%

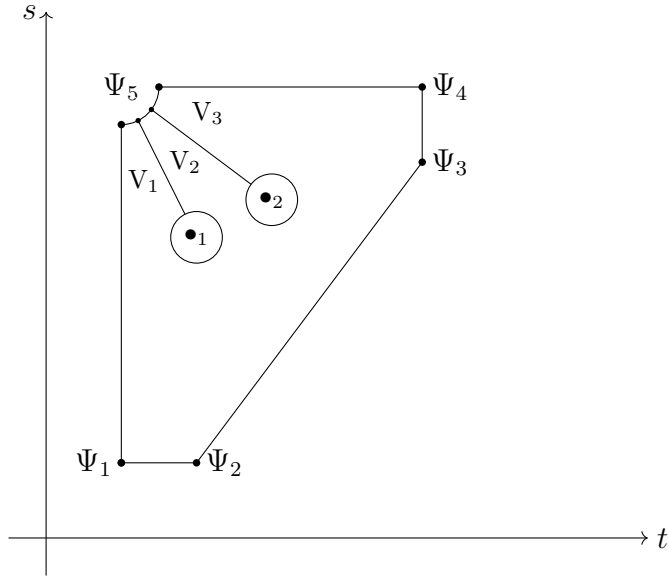
\begin{figure}[h]
	\begin{tikzpicture}
		\draw[->](0.5,0)--(9,0) node [right]{\large $t$};
		\draw[->](1,-0.5)--(1,7) node [left]{\large $s$};
		\draw(2,5.5)--(2,1)--(3,1)--(6,5)--(6,6)--(2.5,6);
		\draw (2,1) node [left] {\large $\Psi_1$};
		\draw (3,1) node [right]{\large $\Psi_2$};
		\draw (6,5) node [right]{\large $\Psi_3$};
		\draw (6,6) node [right]{\large $\Psi_4$};
		\draw (2,6) node {\large $\Psi_5$};
		\draw (2,1) node[circle,fill=black,inner sep=1pt]{};
		\draw (3,1) node[circle,fill=black,inner sep=1pt]{};
		\draw (6,5) node[circle,fill=black,inner sep=1pt]{};
		\draw (6,6) node[circle,fill=black,inner sep=1pt]{};
		\draw (2,1) node[circle,fill=black,inner sep=1pt]{};
		\draw (2,5.5) node[circle,fill=black,inner sep=1pt]{};
		\draw (2.5,6) node[circle,fill=black,inner sep=1pt]{};
		\draw[white, name path=A] (2,6)--(4,4.5);
		\draw[white,name path=B] (2,6)--(3,4);
		\draw[name path=C] (2,5.5) arc (270:360:0.5);
		\fill [name intersections={of=A and C, by={a}}]
		(a) circle (1pt);
		\fill [name intersections={of=B and C, by={b}}]
		(b) circle (1pt);
		\draw (a)--(4,4.5) node [name path=D, circle,draw,fill=white,inner sep=3pt]{$\bullet_2$};
		\draw (b)--(3,4)node [name path=E, circle,draw,fill=white,inner sep=3pt]{$\bullet_1$};
  
        \node at (2.3,4.75) {$\mathrm{V}_1$};
        \node at (2.85,5.0) {$\mathrm{V}_2$};        
        \node at (3.15,5.65) {$\mathrm{V}_3$};
	\end{tikzpicture}
	\caption{$\Psi_1,\dots,\Psi_5$ are 5 solutions of the KZ equation and $\bullet_1,\bullet_2$ are two self intersection points of the curve in the (t,s) plane}
	\label{fig:path}
\end{figure}

In order to define regularized holonomies, we consider five asymptotic regions of the KZ equation associated to tangential base points:
\begin{align}\label{eq:zone}
& 1) \hskip 0.3cm ((z_i z) w) z_j \hskip 0.5cm   0 < t \ll s \ll 1;  \notag\\   
& 2) \hskip 0.3cm (z_i(zw))z_j \hskip 0.5cm  0 < s-t \ll s \ll 1; \notag\\
& 3) \hskip 0.3cm  z_i ((zw)z_j) \hskip 0.5cm 0< s-t \ll 1-t \ll 1; \\
& 4) \hskip 0.3cm  z_i (z (wz_j)) \hskip 0.5cm 0 < 1-s \ll 1-t \ll 1; \notag\\
& 5) \hskip 0.3cm  (z_iz)(wz_j) \hskip 0.5cm 0 < t \ll 1, 0< 1-s \ll 1, \notag
\end{align}
Here paranthesized expressions of the type $((z_i z) w) z_j$ label tangential base points and the corresponding asymptotic regions.



Consider the first asymptotic region. It is convenient to introduce the new coordinates $(u,v)$ by
\[
u:=\frac{z-z_i}{w-z_i}, \quad v:=\frac{w-z_i}{v_i}.
\]
Let
\[
 D_{\varepsilon} := \{ (u,v) \in B_{\varepsilon}(0,0) \mid u \notin \R_{<0}, v \notin v_i\R_{<0} \}
\]
be a small polydisc around $(0,0)$ in $(u,v)$ coordinates with two branch cuts removed, so that
$\log(u)$ and $\log(v)$ (taking real values on the rays opposite to branch cuts) are analytic functions on $D_\varepsilon$. Note that the first asymptotic region $0<t \ll s \ll 1$ maps to $D_\varepsilon$ by the map $\gamma^{(2)}$. Recall the following fact:

\begin{Lemma}\label{lem:existence_sol}
For $\varepsilon$ sufficiently small, there is a unique solution $\Psi_1$ of the KZ equation on $D_{\varepsilon}$ such that
\begin{align*}
\Psi_1(z,w)=f(u,v) \left(\frac{z-z_i}{v_i}\right)^{\frac{t_{iz}}{2\pi i}}\left(\frac{w-z_i}{v_i}\right)^{\frac{t_{iw}+t_{zw}}{2\pi i}},
\end{align*}
where $f(u,v)$ is an analytic function with $f(0,0)=1$.	
\end{Lemma}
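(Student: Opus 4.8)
The plan is to mimic the proof of Lemma \ref{lem:solutions}: substitute the proposed ansatz into the KZ equation \eqref{eq:KZzw}, reduce it to a first-order linear ODE system for the coefficients of the power series $f(u,v)$, and solve recursively. First I would rewrite the connection $A_{z,w}$ in the coordinates $(u,v)$. Note that $z - z_i = uv\,v_i$ and $w - z_i = v\,v_i$, so $d\log(z-z_i) = d\log u + d\log v$ and $d\log(w-z_i) = d\log v$, while $d\log(z-w) = d\log(uv - v) + \text{const} = d\log(u-1) + d\log v$; and for $k \neq i$ the one-forms $d\log(z - z_k)$, $d\log(w - z_k)$ are regular at $(u,v) = (0,0)$ and vanish at $v = 0$. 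Collecting the singular part, the connection in $(u,v)$ coordinates looks like
\[
A_{z,w} = \frac{t_{iz}}{2\pi i}\,d\log u + \frac{t_{iw} + t_{zw}}{2\pi i}\, d\log v + \frac{t_{zw}}{2\pi i}\, d\log(u-1) + (\text{1-form regular at }0,\text{ vanishing at }v=0).
\]
Here I used the Drinfeld--Kohno relations to combine $t_{iw}$ and $t_{zw}$: the combination $t_{iw} + t_{zw} = t_{i,zw}$ is natural because, near $v = 0$, the points $z$ and $w$ both collapse onto $z_i$ while their ratio $u$ stays finite, so the "cluster" $\{z,w\}$ behaves like a single point seen from the $z_k$'s.

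Next I would plug $\Psi_1 = f(u,v)\, (uv)^{t_{iz}/2\pi i} v^{(t_{iw}+t_{zw})/2\pi i}$ into $d\Psi_1 = A_{z,w}\Psi_1$. The $d\log u$ term produces $\partial_u f + \tfrac{1}{u}\,\tfrac{t_{iz}}{2\pi i} f$ on the left, matched against $\tfrac{1}{u}\,\tfrac{t_{iz}}{2\pi i} f$ plus the $d\log(u-1)$ term on the right, giving the commutator equation
\[
\partial_u f - \frac{1}{u}\Bigl[\frac{t_{iz}}{2\pi i}, f\Bigr] = \frac{1}{u-1}\,\frac{t_{zw}}{2\pi i}\, f + (\text{regular, }O(v)),
\]
and similarly the $d\log v$ component gives
\[
\partial_v f - \frac{1}{v}\Bigl[\frac{t_{iz} + t_{iw} + t_{zw}}{2\pi i}, f\Bigr] = (\text{regular at }v=0).
\]
As in Lemma \ref{lem:solutions}, the apparent pole at $u = 0$ in the first equation and at $v = 0$ in the second are resolved by the commutator: writing $f = \sum_{k,\ell} f_{k,\ell} u^k v^\ell$ with $f_{0,0} = 1$, the coefficient $f_{k,\ell}$ for $(k,\ell) \neq (0,0)$ is determined by an equation of the form $(k - \operatorname{ad}_{t_{iz}/2\pi i})(f_{k,\ell}) = (\text{earlier terms})$ or $(\ell - \operatorname{ad}_{(t_{iz}+t_{iw}+t_{zw})/2\pi i})(f_{k,\ell}) = (\text{earlier terms})$; since $\operatorname{ad}$ is nilpotent on each graded piece of the completed enveloping algebra, $(k - \operatorname{ad})$ and $(\ell - \operatorname{ad})$ are invertible for $k, \ell \geq 1$, so all coefficients are uniquely and recursively determined. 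Compatibility of the two recursions (so that a single $f$ solves both, simultaneously) follows from flatness $dA_{z,w} + A_{z,w}\wedge A_{z,w} = 0$ of the KZ connection, exactly as in the one-variable case. Uniqueness is immediate from the recursion: any analytic solution with $f(0,0) = 1$ has its Taylor coefficients forced.

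The one genuine point requiring care — the part I expect to be the main obstacle — is convergence: showing the formal power series $f(u,v)$ actually has a positive radius of convergence so that it defines an honest analytic function on some polydisc $B_\varepsilon(0,0)$, after which removing the branch cuts to make $\log u, \log v$ single-valued is cosmetic. In the one-variable Lemma \ref{lem:solutions} this was handled by the estimate that the radius of convergence of $f$ is at least that of the inhomogeneous term $g$, using that $\|(k - \operatorname{ad}_{t_{iz}/2\pi i})^{-1}\|$ is bounded uniformly in $k$ on each homogeneous component. Here the cleanest route is to invoke the general theory of regular-singular (Fuchsian) integrable connections in several variables with normal-crossing singular divisor $\{u = 0\} \cup \{v = 0\}$ (here also $\{u=1\}$, but that is a regular point of the ansatz): Deligne's theory, or an explicit Majorant-series argument majorizing the $t_{k,z}, t_{k,w}$ entries, guarantees a holomorphic fundamental solution of the stated form on a polydisc. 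Alternatively, one can derive it by hand: bound the operator norms of the inverses $(k-\operatorname{ad})^{-1}, (\ell-\operatorname{ad})^{-1}$ uniformly, feed these into a two-variable majorant estimate against the geometric series coming from the analytic coefficients of the regular part of $A_{z,w}$, and conclude $\limsup (\|f_{k,\ell}\|)^{1/(k+\ell)} < \infty$. I would present the hands-on majorant argument, since it parallels Lemma \ref{lem:solutions} and keeps the paper self-contained.
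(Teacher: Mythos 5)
Your proposal is essentially correct, but it is worth noting that the paper does not actually prove this lemma: its ``proof'' is a citation to Section~2 of Drinfeld's paper. What you propose is the self-contained two-variable analogue of the paper's own proof of Lemma~\ref{lem:solutions}, which is indeed what the standard argument (Drinfeld, Deligne, Brown) amounts to; so your route is more explicit than the paper's, and the trade-off is exactly the one you identify, namely having to supply a two-variable majorant/regular-singular-point argument for convergence rather than outsourcing it. Two points to tighten. First, there is a slip in your displayed formula for $A_{z,w}$ in the $(u,v)$ coordinates: since $d\log(z-z_i)=d\log u+d\log v$, the residue along $\{v=0\}$ is $\tfrac{t_{iz}+t_{iw}+t_{zw}}{2\pi i}$, not $\tfrac{t_{iw}+t_{zw}}{2\pi i}$; your subsequent recursion uses the correct residue, so this is only a typo, but it should be fixed for consistency with the ansatz, whose leading term is $(uv)^{t_{iz}/2\pi i}\,v^{(t_{iw}+t_{zw})/2\pi i}$ and hence carries total $v$-exponent $t_{iz}+t_{iw}+t_{zw}$. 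Second, you should state explicitly the commutation relation that makes the ansatz consistent: by the Drinfeld--Kohno relations \eqref{eq:DK} one has $[t_{iz},\,t_{iw}+t_{zw}]=0$, so the leading factor $g=(uv)^{t_{iz}/2\pi i}v^{(t_{iw}+t_{zw})/2\pi i}$ satisfies $dg\cdot g^{-1}=\tfrac{t_{iz}}{2\pi i}d\log u+\tfrac{t_{iz}+t_{iw}+t_{zw}}{2\pi i}d\log v$, i.e.\ it solves exactly the diagonal (residue) part of the equation; this is the point where the parenthesization $((z_iz)w)$ and the relations of $\mathfrak{t}_{n+2}$ actually enter, and without it the reduction to the commutator equations for $f$ does not go through. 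With these two repairs the recursion, the flatness-based compatibility of the $\partial_u$ and $\partial_v$ systems, and the majorant argument for convergence give a complete proof.
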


\begin{proof}
See Section 2 in \cite{Drinfeld1990}. 
\end{proof}

\begin{Rem}
We use the following shorthand notation for asymptotic behavior of the function $\Psi_1(z,w)$:
\begin{equation}
\Psi_1(z,w)
	\sim_{((z_iz)w)z_j}\left(\frac{z-z_i}{v_i}\right)^{\frac{t_{iz}}{2\pi i}}\left(\frac{w-z_i}{v_i}\right)^{\frac{t_{iw}+t_{zw}}{2\pi i}}.
\end{equation}
Similarly, in the other four asymptotic regions we get the following solutions:
\begin{align}
&\Psi_2(z,w)\sim_{\left(z_i\left(z w\right)\right)z_j}\left( \frac{w-z}{v_i}\right)^{\frac{t_{zw}}{2\pi i}} \, \left(\frac{w-z_i}{v_i}\right)^{\frac{t_{iz}+t_{iw}}{2\pi i}},\\
&\Psi_3(z,w)\sim_{z_i\left(\left(zw\right) z_j\right)} \left(\frac{w-z}{-v_j}\right)^{\frac{t_{zw}}{2\pi i}} \, \left(\frac{z-z_j}{v_j}\right)^{\frac{t_{zj}+t_{wj}}{2\pi i}},\\
&\Psi_4(z,w)\sim_{z_i\left(z\left(wz_j\right)\right)} \left(\frac{w-z_j}{v_j}\right)^{\frac{t_{wj}}{2\pi i}}\left(\frac{z-z_j}{v_j}\right)^{\frac{t_{zw}+t_{zj}}{2\pi i}},\\
&\Psi_{5}(z,w)\sim_{(z_iz)(wz_j)}\left(\frac{z-z_i}{v_i}\right)^{\frac{t_{iz}}{2\pi i}}\left(\frac{w-z_j}{v_j}\right)^{\frac{t_{wj}}{2\pi i}}.
\end{align}
Note that in the expressions for $\Psi_2$ and $\Psi_3$ we are using tangential base points $(0,v_i)$ and $(0, v_j)$ for the $(zw)$ configuration.
\end{Rem}

\subsection{Regularized holonomies}
\label{sub:self_intersection_contributions}
In this Section, we compute regularized holonomies for different parts of the closed path of Fig. \ref{fig:path}. More precisely, we extend the asymptotic solutions $\Psi_1, \dots, \Psi_4$ from above to the simply connected domain shown in Fig \ref{fig:path}. For $\Psi_5$ we obtain solutions
\[
\Psi_{5,l}, \quad l = 1, \dots, m+1,
\]
where $\Psi_{5,l}$ coincides with the solution $\Psi_5$ in the region $\mathrm{V}_l$. Moreover, for each intersection point $l=1,\dots,m$ we will define two solutions $\Psi_{a_l,u},$ and $\Psi_{a_l,d}$ related by
\[
\Psi_{a_l,d} = \Psi_{a_l,u} \exp(-\epsilon_l t_{z,w}).
\]
With that the generalized pentagon equation will follow by identifying the terms in
\begin{multline*}
\underbrace{\Psi_4^{-1} \Psi_3}_{\Phi_{\rm{KZ}} \atop\text{see \eqref{eq:Phi2}}} 
\underbrace{\Psi_3^{-1} \Psi_2}_{H_{zw} \left| \frac{v_j}{v_i} \right| ^{\frac{t_{zw}}{2\pi}} e^{t_{zw} {\rm rot}_\gamma} \atop\text{see \eqref{eq:H_zw}}}
\underbrace{\Psi_2^{-1} \Psi_1}_{\Phi_{\rm{KZ}} \atop\text{see \eqref{eq:Phi1}}} =  \\ 
=
\underbrace{\Psi_4^{-1}\Psi_{5,m+1}}_{H_z \atop\text{see \eqref{eq:H_z}}} \prod_{l=1}^m \left(\underbrace{\Psi_{5,l+1}^{-1} \Psi_{a_l,u}}_{C_l^{-1}}
\underbrace{\Psi_{a_l,u}^{-1} \Psi_{a_l,d}}_{e^{-\epsilon_i t_{zw}} \atop\text{see \eqref{eq:intterms}}}
\underbrace{\Psi_{a_l,d}^{-1} \Psi_{5,l}}_{C_l} \right)
\underbrace{\Psi_{5,1}^{-1} \Psi_1}_{H_w \atop\text{see \eqref{eq:H_w}}}.
\end{multline*}

We are carrying out these identificatons in the following sections. The proof of $H_{z},H_{w}, H_{zw}$ and $\Phi_{\rm{KZ}}$ contributions are similar to Section 2 of Drinfeld \cite{Drinfeld1990} and Lemma XIX.8.2 in \cite{Kassel1995}

\subsubsection{Self-intersection points} \label{sub:self_intersection}

In this Section, we consider solutions of \eqref{eq:KZzw} near self-intersection points. Suppose that the path $\gamma$ has $m$ transverse self-intersection points.  We order these points in the counter-clockwise direction by their positions on the $s-t$ plane, see Fig. \ref{fig:path}. 
If two self-intersection points are located on the same ray, we perturb the path by a small regular homotopy. For each self-intersection point, we denote by $\varepsilon_l$ the local intersection number and by  $a_l=\gamma(s_l)=\gamma(t_l)$ the position of the intersection point, where $s_l>t_l$, $l=1,\dots,m$.
By perturbing the curve $\gamma$ by a regular homotopy if needed, we can assume that $\gamma$ is a linear function for $t$ near $t_l$ and $s_l$ for $l=1, \dots, m$.

We denote by $L_l$ the line in the $z-w$ plane corresponding to the straight line 
\begin{align}
s=\frac{s_l-1}{t_l}t+1
\end{align}
in the $t-s$ plane which connects the points $(0,1
)$ and $(t_l,s_l)$. Since $w-z=\gamma(s)-\gamma(t)$, it is convenient to introduce notation
\begin{align}
u_l=\frac{d}{dt} \ \left(\gamma\left(\frac{s_l-1}{t_l}t+1\right)-\gamma(t)\right)|_{t=t_l}.
\end{align}
By assumptions, $\gamma(t)$ is linear in $t$ near $t_l$ and $s_l$ which implies
\begin{equation}
((z-w) \circ L_l)(t) = a_l+u_l(t-t_l),\quad \text{for}\quad  |t-t_l| \quad \text{sufficiently small}.
\end{equation}

We denote by  $B_{\varepsilon,a_l}(z-a_l,w-z)$ the polydisk centered at $(0,0)$ with radius $\varepsilon$ and let $l_{a_l}=\{(z-a_l,w-z); w-z=\gamma(s)-\gamma(t),s=\frac{s_l-1}{t_l}t+1,0\le t\le t_l\}$ denote a branch cut of the function $\log(w-z)$, and we denote 
$$
D_{\varepsilon,a_l}:=B_{\varepsilon,a_l}(z-a_l,w-a_l)\setminus l_{a_l} \cap B_{\varepsilon,a_l}(z-a_l,w-a_l).
$$

Similar to Theorem 4.6 in \cite{Brown2009} (see also \cite{Deligne1970}), there is a unique solution $\Psi_{a_l, u}(z,w)$ of the KZ equation on $D_{\varepsilon,a_l}$ such that 
\begin{align*}
		\Psi_{a_l, u}(z,w)=f(z-a_l,w-z)\exp\left(\log\left(\frac{w-z}{-u_l}\right)\frac{t_{zw}}{2\pi i} +\epsilon_l \frac{t_{zw}}{2} \right)
\end{align*}
 where $f$ is analytic on  $B_{\varepsilon,a_l}$ and $f(0,0)=1$, where
 \begin{align}
 \epsilon_l = \mathrm{sign} \left( \mathrm{Im}\left( \frac{\dot{\gamma}(s_l)}{\dot{\gamma}(t_l)} \right) \right)
 \end{align}
 is the sign of the intersection. 
 
The normalization is chosen such that it satisfies the following property. If we extend $\Psi_{a_l,u}$ continuously over the line $L_l$ from above, then on the line we have
 \[
 \Psi_{a_l,u} \sim (t-t_l)^{t_{zw}},
 \]
 Similarly, we can define a solution $\Psi_{a_l,d}$ that has the desired asymptotics when we extend over $L_l$ from below, namely as analytic extension of
 \begin{align*}
		\Psi_{a_l, d}(z,w)=f(z-a_l,w-z)\exp\left(\log\left(\frac{w-z}{-u_l}\right)\frac{t_{zw}}{2\pi i} - \epsilon_l \frac{t_{zw}}{2} \right),
\end{align*}
 so that we have
$$
\Psi_{a_l, d}(z,w) = \Psi_{a_l, u}(z,w) \, e^{-\varepsilon_l t_{zw}},
$$
as well as
\[
\lim_{(z,w) \mathrel{\downarrow}{(z_0,w_0)}} \Psi_{a_l,u}(z,w) = \lim_{(z,w) \mathrel{\uparrow}{(z_0,w_0)}} \Psi_{a_l,d}(z,w).
\]
where $(z_0,w_0)$ is any point on the line $L_l$ and the limits are approaching the line from above and below, respectively.
Recall that $\Psi_{5,l}$ and $\Psi_{5,l+1}$ are defined by the condition that they coincide with the asymptotic solution at $(t,s)=(0,1)$ in the regions $\mathrm{V}_l$ and $\mathrm{V}_{l+1}$, respectively. Thus we have
\[
\lim_{(z,w) \mathrel{\downarrow}{(z_0,w_0)}} \Psi_{5,l+1}(z,w) = \lim_{(z,w) \mathrel{\uparrow}{(z_0,w_0)}} \Psi_{5,l}(z,w).
\]
We now define the regularized holonomy along $L_l$ to be
\begin{equation}
C_l=\holr(A_{z,w},L_l):=\Psi_{a_l, u}^{-1} \Psi_{5, l+1}.
\end{equation}
From the above limit considerations we obtain
\begin{align*}
    C_l &= \lim_{(z,w) \mathrel{\downarrow}{(z_0,w_0)}}   \Psi_{a_l, u}^{-1}(z,w) \Psi_{5, l+1}(z,w) \\
    &= \lim_{(z,w) \mathrel{\uparrow}{(z_0,w_0)}}   \Psi_{a_l, d}^{-1}(z,w) \Psi_{5, l}(z,w) \\
    &= \Psi_{a_l,d}^{-1} \Psi_{5,l+1}.
\end{align*}



We conclude that
\begin{align}\label{eq:intterms}
    \Psi_{5,l+1}^{-1} \Psi_{5,l} &= \Psi_{5,l+1}^{-1} \Psi_{a_l,u} \Psi_{a_l,u}^{-1} \Psi_{a_l,d} \Psi_{a_l,d}^{-1} \Psi_{5,l} \notag \\
    &= C_l^{-1} e^{-\epsilon_l t_{zw}} C_l.
\end{align}


\subsubsection{$H_z$ and $H_w$}

First, we consider the expression $\Psi_{5, 1}(z,w)^{-1}\Psi_1(z,w)$, where the two solutions are analytically continued in the strip of the variable $w$ along the path $\gamma$. 

At this point, it is convenient to introduce new functions
$$
\tilde{\Psi}_{5,1}(z,w):=\left(\frac{z-z_i}{v_i}\right)^{\frac{-t_{iz}}{2\pi i}}\Psi_{5,1}(z,w), \hskip 0.3cm
\tilde{\Psi}_{1}(z,w):=\left(\frac{z-z_i}{v_i}\right)^{\frac{-t_{iz}}{2\pi i}}\Psi_{1}(z,w).
$$
Observe that 
$$
\Psi_{5, 1}(z,w)^{-1}\Psi_1(z,w)=\tilde{\Psi}_{5, 1}(z,w)^{-1} \tilde{\Psi}_1(z,w),
$$
and that functions $\tilde{\Psi}_{5,1}(z,w)$ and $\tilde{\Psi}_1(z,w)$ satisfy the same differential equation (the conjugate of the KZ equation by the factor $((z-z_i)/v_i)^{t_{iz}/2\pi i}$. Furthermore, both $\tilde{\Psi}_{5,1}(z,w)$ and $\tilde{\Psi}_1(z,w)$ are regular functions at $z=z_i$, 
and $\tilde{\Psi}_{5,1}(z_i,w)$ and $\tilde{\Psi}_1(z_i,w)$ are solutions of the KZ equation 
\begin{equation}
d\tilde{\Psi}=
\left(\frac{t_{iz,w}}{2\pi i} \, d\log(w-z_i)+\sum_{k\ne i}\frac{t_{k,w}}{2\pi i}\, d\log(w-z_k)\right)\tilde{\Psi}.
\end{equation}
In fact,
$$
\tilde{\Psi}_{5,1}(z_i,w)=\Psi_{(z_j, v_j)}(w), \hskip 0.3cm
\tilde{\Psi}_1(z_i,w) = \Psi_{(z_i, v_i)}(w).
$$
We can now conclude
\begin{equation} \label{eq:H_w}
\begin{array}{lll}
\Psi_{5, 1}(z,w)^{-1}\Psi_1(z,w) & = &\tilde{\Psi}_{5, 1}(z,w)^{-1} \tilde{\Psi}_1(z,w) \\
& = & \Psi_{(z_j, v_j)}(w)^{-1} \Psi_{(z_i, v_i)}(w) \\
& = & 
\holr\left( \frac{t_{iz,w}}{2\pi i} \, d\log(w-z_i)+\sum_{k\ne i}\frac{t_{k,w}}{2\pi i}\, d\log(w-z_k),\gamma\right)
\\
& = & H_w.
\end{array}
\end{equation}
In a similar way, we have
\begin{equation} \label{eq:H_z}
\begin{array}{lll}
\Psi_4(z,w)^{-1} \Psi_{5, m+1}(z,w) & = &
\holr\left( \frac{t_{z,wj}}{2\pi i}\, d\log(z-z_j)+\sum_{k\ne j}\frac{t_{k,z}}{2\pi i}\, d\log(z-z_k),\gamma\right)\\
& = & H_z.
\end{array}
\end{equation}


\subsubsection{$H_{zw}$}

In this Section, we consider the expression $\Psi_3(z,w)^{-1} \Psi_2(z,w)$. 

To start with, it is useful to consider the following simple differential equation
$$
d\phi = t_{z,w} d\log(z-w) \phi.
$$
We are interested in solutions of this equation for $z=\gamma(t), w=\gamma(s)$ with $t<s$. It convenient to introduce two normalized solutions $\phi_i(z,w)$ and $\phi_j(z,w)$,
\begin{equation}
\lim_{(s,t) \to (0,0)} \phi_i(z,w) \left(s-t\right)^{-\frac{t_{zw}}{2 \pi i}}  = 1,
\hskip 0.3cm
\lim_{(s,t) \to (1,1)} \phi_j(z,w) \left(s-t\right)^{-\frac{t_{zw}}{2 \pi i}}  = 1.
\end{equation}
The normalizations are chosen such that 
\begin{align*}
    \phi_i(z,w) &= \left(\frac{w-z}{v_i}\right)^{\frac{t_{zw}}{2 \pi i}}, &
    \phi_j(z,w) &= \left(\frac{w-z}{-v_j}\right)^{\frac{t_{zw}}{2 \pi i}}
\end{align*}
in the second and third asymptotic region, respectively.

Note that expressions $\phi_i(z,w) \left(s-t\right)^{-\frac{t_{zw}}{2 \pi i}}$ and $\phi_j(z,w) \left(s-t\right)^{-\frac{t_{zw}}{2 \pi i}}$ admit continuous extensions to the line  $s=t$, we denote these extensions by $\varphi_i(t)$ and $\varphi_j(t)$, respectively. These functions are solutions 
of the following  differential equation:
\[
d \varphi = t_{zw} \, d \log\left( \lim_{s \to t} \frac{\gamma(s) - \gamma(t)}{s-t} \right) \, \varphi = t_{zw} \, d \log \dot{\gamma}(t) \, \varphi.
\]
This implies
\begin{align*}
\phi_j(z, w) \phi_i^{-1}(z,w) &=  \phi_j(z,w) \left(s-t\right)^{-\frac{t_{zw}}{2 \pi i}} \left( \phi_i(z,w) \left(s-t\right)^{-\frac{t_{zw}}{2 \pi i}} \right)^{-1} \\
&= \exp \left( t_{zw} \int_0^1 d \log \dot{\gamma}(t) \right) \\
&= \left| \tfrac{v_j}{v_i} \right|^{\frac{-t_{zw}}{2\pi}}  e^{-t_{zw} {\rm rot}_\gamma},
\end{align*}
where we used definition \eqref{eq:rot} in the last equation. Similarly to the previous Section, we define
$$
\tilde{\Psi}_2(z,w)=\phi_i^{-1}(z,w)\Psi_2(z,w), 
\hskip 0.3cm
\tilde{\Psi}_3(z,w) = \phi_j^{-1}(z,w)\Psi_3(z,w).
$$

Again, $\tilde{\Psi}_2(z,w)$ and $\tilde{\Psi}_3(z,w)$ satisfy the same differential equation (the conjugate of the KZ equation by the factor $(w-z)^{t_{zw}/2\pi i}$), they are regular at $w=z$, and their values $\tilde{\Psi}_2(z=w)$ and $\tilde{\Psi}_3(z=w)$ satisfy 
the equation
$$
d\tilde{\Psi}=\left( \sum_{k=1}^n \frac{t_{k,zw}}{2\pi i} \, d\log(z-z_l)\right) \tilde{\Psi}.
$$
By considering the asymptotic behavior, we identify
$$
\tilde{\Psi}_2(z=w) = \Psi_{(z_i, v_i)}^{1, \dots, n, zw}(z=w),
\hskip 0.3cm
\tilde{\Psi}_3(z=w) = \Psi_{(z_j, v_j)}^{1, \dots, n, zw}(z=w)
$$.
Using that $[t_{zw}, t_{i, zw}]=0$ for all $i=1, \dots, n$, we conclude
\begin{equation}      \label{eq:H_zw}
\begin{array}{lll}
\Psi_3(z,w)^{-1} \Psi_2(z,w) & =  &
\tilde{\Psi}_3(z=w)^{-1}  
\left| \frac{v_j}{v_i} \right|^{\frac{t_{zw}}{2\pi}} 
e^{t_{zw} {\rm rot}_\gamma} \, \tilde{ \Psi}_2(z=w) \\
& = & \tilde{\Psi}_3(z=w)^{-1} \tilde{ \Psi}_2(z=w) \, \left| \frac{v_j}{v_i} \right| ^{\frac{t_{zw}}{2\pi}} e^{t_{zw} {\rm rot}_\gamma} \\
& = & {\rm Hol}^{\rm reg}\left( \sum_{k=1}^n \frac{t_{k,zw}}{2\pi i} \, d\log(z-z_l), \gamma\right) \left| \frac{v_j}{v_i} \right| ^{\frac{t_{zw}}{2\pi}} e^{t_{zw} {\rm rot}_\gamma}  \\
& = & H_{zw} \left| \frac{v_j}{v_i} \right| ^{\frac{t_{zw}}{2\pi}} e^{t_{zw} {\rm rot}_\gamma}.
\end{array}   
\end{equation}

\subsubsection{$\Phi_{\rm KZ}$ contributions}

Next, we consider the regularized holonomy $\Psi_2(z,w)^{-1} \Psi_1(z,w)$. In the asymptotic region $0 < s,t \ll 1$, it is convenient to introduce
$$
\tilde{\Psi}_{1,2}(z,w) = \left( \frac{w-z_i}{v_i} \right)^{-(t_{iz} + t_{iw}+t_{zw})/2\pi i} \Psi_{1,2}(z,w).
$$
On the one hand, we have
$$
\tilde{\Psi}_2(z,w)^{-1} \tilde{\Psi}_1(z,w) = \Psi_2(z,w)^{-1} \Psi_1(z,w).
$$
And on the other hand, $w=z_i$ is not a singularity of the differential equation, $\tilde{\Psi}(w=z_i)$ satisfies the following equation:
$$
d \tilde{\Psi} = \left( t_{iz} d\log\left(\frac{z-z_i}{w-z_i} \right) +
t_{zw} d\log\left(\frac{w-z}{w-z_i} \right) + \dots \right) \tilde{\Psi},
$$
where $\dots$ stand for the terms regular in $(z-z_i), (w-z_i)$. By making the change of variables
$$
\zeta = \frac{z-z_i}{w-z_i}, \hskip 0.3cm
1-\zeta = \frac{w-z}{w-z_i}
$$
we identify the regularized holonomy with the Drinfeld associator
\begin{equation} \label{eq:Phi1}
    \Psi_2(z,w)^{-1} \Psi_1(z,w) = \Phi_{\rm KZ}(t_{iz}, t_{zw}).
\end{equation}
In a similar fashion, we obtain
\begin{equation} \label{eq:Phi2}
    \Psi_4(z,w)^{-1} \Psi_3(z,w) = \Phi_{\rm KZ}(t_{zw}, t_{wj}).
\end{equation}



\subsection{Properties of $C_l$}
\label{sub:property_C_l}
We now have a rather explicit description of all the terms in the generalized pentagon equation with the exception of holonomies $C_l$. In this Section, we establish an important property of these regularized holonomies.

Consider the quotient of the Lie algebra $\mathfrak{t}_{n+2}$ by the Lie ideal generated by $t_{zw}$. Denote the quotient Lie algebra by $\tau= \mathfrak{t}_{n+2}/\langle t_{zw} \rangle$ and the canonical projection by $\pi: \mathfrak{t}_{n+2} \to \tau$.  Observe that the image of generators $\pi(t_{i,z})$ and $\pi(t_{i,w})$ for $i=1, \dots, n$ span two commuting free Lie algebras with $n$ generators, that is $\tau \cong \mathfrak{f}_z \oplus \mathfrak{f}_w$.  Then the image of the connection $A_{zw}$ splits into two parts:
$$
\pi(A_{z,w}) = \frac{1}{2\pi i}\left(\sum_{k=1}^n (\pi(t_{k,z}) d\log(z-z_k) + \pi(t_{k,w})d\log(w-z_k))\right) = \pi(A_z) + \pi(A_w),
$$
where $\pi(A_z)$ is a connection on $\Sigma = \mathbb{C}\backslash \{ z_1, \dots, z_n\}$ corresponding to the point $z$, and similarly $\pi(A_w)$ is a connection on $\Sigma$ corresponding to the point $w$.

\begin{Th}\label{thm:C_l}
    We have,
    \begin{equation}
        \pi(C_l) = {\rm Hol}^{\rm reg}(\pi(A_z), \gamma_{[0,t_l]}) 
        {\rm Hol}^{\rm reg}(\pi(A_w), \gamma_{[1,s_l]}).
    \end{equation}
\end{Th}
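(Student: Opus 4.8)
The plan is to work entirely in the quotient Lie algebra $\tau = \mathfrak{t}_{n+2}/\langle t_{zw}\rangle \cong \mathfrak{f}_z \oplus \mathfrak{f}_w$, where, as noted just before the statement, the connection $A_{z,w}$ projects to $\pi(A_z) + \pi(A_w)$ — a sum of a connection depending only on the variable $z$ and a connection depending only on the variable $w$. Since $\pi(A_z)$ and $\pi(A_w)$ take values in the two commuting summands $\mathfrak{f}_z$ and $\mathfrak{f}_w$, any solution of the KZ equation $d\Psi = \pi(A_{z,w})\Psi$ factors as a product $\Psi = \Psi^{(z)}\Psi^{(w)}$, where $\Psi^{(z)}$ solves $d\Psi^{(z)} = \pi(A_z)\Psi^{(z)}$ (a function of $z$ alone) and $\Psi^{(w)}$ solves the analogous equation in $w$; the two factors commute. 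This is the structural observation that makes everything collapse.

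First I would apply this factorization to the solution $\Psi_{5,l+1}$. By its defining normalization (it agrees with the asymptotic solution $\Psi_5 \sim_{(z_iz)(wz_j)} \left(\frac{z-z_i}{v_i}\right)^{t_{iz}/2\pi i}\left(\frac{w-z_j}{v_j}\right)^{t_{wj}/2\pi i}$ in region $\mathrm{V}_{l+1}$), under $\pi$ its two factors are precisely the local solution $\Psi_{(z_i,v_i)}(z)$ of the $z$-KZ equation and the local solution $\Psi_{(z_j,v_j)}(w)$ of the $w$-KZ equation. Next I would analyze $\pi(\Psi_{a_l,u})$: the factor $\exp\!\left(\log\!\left(\frac{w-z}{-u_l}\right)\frac{t_{zw}}{2\pi i} + \epsilon_l \frac{t_{zw}}{2}\right)$ becomes trivial in the quotient, so $\pi(\Psi_{a_l,u})$ is just the analytic part $\pi(f(z-a_l, w-z))$, which by the factorization is again a product $\Psi^{(z)}_{a_l}(z)\,\Psi^{(w)}_{a_l}(w)$ of local solutions of the $z$- and $w$-equations normalized to $1$ at $z = a_l = \gamma(t_l)$ and $w = a_l = \gamma(s_l)$ respectively. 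Then $\pi(C_l) = \pi(\Psi_{a_l,u})^{-1}\pi(\Psi_{5,l+1})$ splits into a $z$-part, namely $\Psi^{(z)}_{a_l}(z)^{-1}\Psi_{(z_i,v_i)}(z)$, which is by Definition \eqref{eq:def_regularized_hol} exactly $\holr(\pi(A_z), \gamma_{[0,t_l]})$ (the path segment from the tangential base point $(z_i,v_i)$ to the regular point $a_l$), and a $w$-part $\Psi^{(w)}_{a_l}(w)^{-1}\Psi_{(z_j,v_j)}(w) = \holr(\pi(A_w), \gamma_{[1,s_l]})$ (the segment from $(z_j,v_j)$ back to $a_l$). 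Multiplying the two commuting factors gives the claimed identity.

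The main obstacle I expect is bookkeeping the base points and orientations correctly: one must verify that the $z$-factor of $\Psi_{5,l+1}$ really continues along $\gamma$ (in the $z$-strip, holding $w$ in its asymptotic region) to the local solution attached to $(z_i, v_i)$ and not, say, to something twisted by a loop, and likewise that the $w$-factor lands on $(z_j,v_j)$ — this uses that $\mathrm{V}_{l+1}$ touches the corner $(t,s)=(0,1)$ and that the relevant continuations are along segments $\gamma_{[0,t_l]}$ and $\gamma_{[1,s_l]}$ that do not wind around any $z_k$. One should also check that the factorization $\Psi = \Psi^{(z)}\Psi^{(w)}$ is compatible with analytic continuation along the path (immediate, since continuation acts factor-wise) and that the normalization points $a_l$ are genuinely regular points of both $\pi(A_z)$ and $\pi(A_w)$, so that $\holr$ of the segments is the ordinary parallel transport. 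Once these identifications are pinned down, the argument is a direct consequence of the splitting $\tau \cong \mathfrak{f}_z \oplus \mathfrak{f}_w$ together with the multiplicativity of holonomy under composition of paths.
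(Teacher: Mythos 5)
Your proposal is correct and follows essentially the same route as the paper: pass to the quotient $\tau \cong \mathfrak{f}_z \oplus \mathfrak{f}_w$, use that the projected connection and hence the solutions $\pi(\Psi_{a_l,\bullet})$ and $\pi(\Psi_{5,\bullet})$ factor into commuting $z$- and $w$-parts identified by their normalizations at $a_l$ and at the tangential base points, and read off the two regularized holonomies. (Your use of $\Psi_{a_l,u}$ and $\Psi_{5,l+1}$ versus the paper's $\Psi_{a_l,d}$ and $\Psi_{5,l}$ is immaterial, since they differ by factors involving $t_{zw}$ which die in the quotient.)
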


\begin{proof}
Recall that
$$
C_l=\Psi_{a_l, d}^{-1}(z,w)\Psi_{5, l}(z,w),
$$
where 
$$
\Psi_{a_l, d}(z,w) \sim \left(\frac{w-z}{-u_l}\right)^{t_{zw}/2\pi i}, \hskip 0.3cm
\Psi_{5, l}(z,w) \sim \left(\frac{z-z_i}{v_i}\right)^{t_{iz}/2\pi i}
\left(\frac{w-z_j}{v_j}\right)^{t_{jz}/2\pi i}.
$$
After taking a quotient, the self-intersection point $a_l$ becomes a regular (and not tangential) base point for $\pi(A_{zw})$, and we also have
$$
\pi(\Psi_{a_l, d})(a_l,a_l) = 1.
$$
Then,
$$
\pi(\Psi_{a_l, d}) = \Psi_{a_l}(z) \Psi_{a_l}(w),
$$
where $\Psi_{a_l}(z)$ and  $\Psi_{a_l}(w)$ are local solutions corresponding to connections $\pi(A_z)$ and $\pi(A_w)$ with the standard normalization at the regular point $\Psi_{a_l}(a_l)=1$.

Similarly, the asymptotic condition for $\Psi_{5, l}(z,w)$ splits into two independent factors:
$$
\pi(\Psi_{5, l})(z,w) \sim \left(\frac{z-z_i}{v_i}\right)^{\pi(t_{iz})/2\pi i}
\left(\frac{w-z_j}{v_j}\right)^{\pi(t_{jz})/2\pi i},
$$
and hence
$$
\pi(\Psi_{5, l})(z,w) = \Psi_{(z_i,v_i)}(z) \Psi_{(z_j, v_j)}(w).
$$

We conclude,
$$
\begin{array}{lll}
\pi(\Psi_{a_l, d}^{-1}(z,w)\Psi_{5, l}(z,w)) & = & \pi(\Psi_{a_l, d})^{-1}(z,w)\pi(\Psi_{5, l})(z,w)) \\
& = & (\Psi_{a_l}(z)^{-1} \Psi_{a_l}(w)^{-1})(\Psi_{(z_i,v_i)}(z) \Psi_{(z_j, v_j)}(w)) \\
& = & (\Psi_{a_l}(z)^{-1} \Psi_{(z_i,v_i)}(z))(\Psi_{a_l}(w)^{-1}\Psi_{(z_j, v_j)}(w)) \\
& = & {\rm Hol}^{\rm reg}(\pi(A_z), \gamma_{[0,t_l]}) 
        {\rm Hol}^{\rm reg}(\pi(A_w), \gamma_{[1, s_l]}),
\end{array}
$$
as required.   
\end{proof}

\end{document}